\numberwithin{equation}{section}
\newtheorem{thm}{Theorem}[section]
\newtheorem{claim}{Claim}
\newtheorem{lemma}[thm]{Lemma}
\newtheorem{prop}[thm]{Proposition}
\newtheorem{definition}{Definition}[section]
\newcommand{\R}{\mathbb{R}}
\newcommand{\C}{\mathbb{C}}
\newcommand{\N}{\mathbb{N}}
\newcommand{\Z}{\mathbb{Z}}
\newcommand{\NN}{\mathcal{N}}
\numberwithin{equation}{section}
\begin{document}
\title[Rapid Stabilization for the KdV--KdV system]{Rapid Exponential Stabilization of a Boussinesq System of KdV--KdV Type}
\author[Capistrano--Filho]{Roberto de A. Capistrano--Filho}
\address{Departamento de Matem\'atica, Universidade Federal de Pernambuco (UFPE), Avenida Professor Luiz Freire S/N, 50740-545, Recife (PE), Brazil.}
\email{roberto.capistranofilho@ufpe.br}
\author[Cerpa]{Eduardo Cerpa*}
\address{Instituto de Ingeniería Matemática y Computacional, Facultad de Matemáticas,  Pontificia Universidad Católica de Chile, Avda. Vicuña Mackenna 4860, Macul, Santiago, Chile}
\email{eduardo.cerpa@mat.uc.cl}
\author[Gallego]{Fernando A. Gallego}
\address{Departamento de Matem\'atica, Universidad Nacional de Colombia (UNAL), Cra 27 No. 64-60, 170003, Manizales, Colombia}
\email{fagallegor@unal.edu.co}
\subjclass[2020]{Primary: 93D15, 35Q53 Secondary: 93B05}
\keywords{KdV--KdV system, Gramian-based method, stabilization, feedback control}
\thanks{*Corresponding author: eduardo.cerpa@mat.uc.cl}

\begin{abstract}
This paper studies the exponential stabilization of a Boussinesq system describing the two-way propagation of small amplitude gravity waves on the surface of an ideal fluid, the so-called \textit{Boussinesq system of the Korteweg--de Vries type}. We use a Gramian-based method introduced by Urquiza to design our feedback control. By means of spectral analysis and Fourier expansion, we show that the solutions of the linearized system decay uniformly to zero when the feedback control is applied. The decay rate can be chosen as large as we want. The main novelty of our work is that we can exponentially stabilize this system of two coupled equations using only one scalar input. 
\end{abstract}

\maketitle

\section{Introduction}

\subsection{Setting of the problem} 
Boussinesq introduced in \cite{boussinesq1} several nonlinear partial differential equations to explain certain physical observations concerning the water waves, e.g. the emergence and stability of solitons. Unfortunately, several systems derived by Boussinesq proved to be ill-posed, so that there was a need to propose other systems with better mathematical properties.  In that direction, the four parameter family of Boussinesq systems
\begin{equation}
\left\{
\begin{array}
[c]{l}%
\eta_{t}+v_{x}+(   \eta v)  _{x}+av_{xxx}-b\eta_{xxt}=0\text{,}\\
v_{t}+\eta_{x}+vv_{x}+c\eta_{xxx}-dv_{xxt}=0,
\end{array}
\right.  \label{int_29e}%
\end{equation}
was introduced by  Bona \textit{et al.}  in \cite{bona-chen-saut}  to describe the motion of small amplitude long waves on the surface of an ideal fluid under the gravity force and in situations where the motion is sensibly two-dimensional. In (\ref{int_29e}), $\eta$ is the elevation of the fluid surface from the equilibrium position and $v$ is the horizontal velocity in the flow. The parameters $a$, $b$, $c$, $d$ are required to fulfill the relations
\begin{equation}
a+b=\frac{1}{2}(   \theta^{2}-\frac{1}{3})  \text{, \ \ \ }%
c+d=\frac{1}{2}(   1-\theta^{2})  \geq 0,
 \label{int_30e}%
\end{equation}
where $\theta\in\left[  0,1\right] $ and thus $a+b+c+d=\frac{1}{3}$. As it has been proved in \cite{bona-chen-saut}, the initial value problem for the {\em linear system} associated with \eqref{int_29e} is well posed on $\mathbb R$ if and only if the parameters $a,b,c,d$ fall in one of the  following cases
\begin{eqnarray*}
(\textrm{C}1)&& b,d\ge 0,\ a\le 0,\ c\le 0;\\
(\textrm{C}2)&& b,d\ge 0, \ a=c>0. 
\end{eqnarray*}

The well-posedness of the system  \eqref{int_29e} on the line $(x\in \R$) was investigated in \cite{BCS2}.  Considering $(C_2)$ with $b=d=0$, then necessarily $a=c=1/6$. Using the scaling $x\to x/\sqrt{6}$, $t\to t/\sqrt{6}$ gives a coupled system of two Korteweg--de Vries (KdV) equations  equivalent to \eqref{int_29e} for which $a=c=1$, namely
\begin{equation}
\label{b1}
\begin{cases}
\eta_t + w_x+w_{xxx}+  (\eta w)_x= 0, & \text{in} \,\, (0,L)\times (0,+\infty) ,\\
w_t +\eta_x +\eta_{xxx} +ww_x=0,  & \text{in} \,\, (0,L)\times (0,+\infty), \\
\eta(x,0)= \eta_0(x), \quad w(x,0)=  w_0(x), & \text{in} \,\, (0,L),
\end{cases}
\end{equation}
which is the so-called \textit{Boussinesq system of KdV--KdV type}. 

The goal of this paper is to investigate the boundary stabilization for  the linear Boussinesq system of KdV--KdV type
\begin{equation}\label{bb1}
\begin{cases}
\eta_t + w_x+w_{xxx}= 0, & \text{in} \,\, (0,L)\times (0,+\infty),\\
w_t +\eta_x +\eta_{xxx} =0,  & \text{in} \,\, (0,L)\times (0,+\infty), \\
\eta(x,0)= \eta_0(x), \quad w(x,0)=  w_0(x), & \text{in} \,\, (0,L),
\end{cases}
\end{equation}
with boundary conditions
\begin{equation}\label{b1.3}
\begin{cases}
\eta(0,t)=0,\,\,\eta(L,t)=0,\,\,\eta_{x}(0,t)=f(t),& \text{in} \,\, (0,+\infty),\\
w(0,t)=0,\,\,w(L,t)=0,\,\,w_{x}(L,t)=0,& \text{in} \,\, (0,+\infty), 
\end{cases}
\end{equation}
%or 
%\begin{equation}\label{b1.3_1}
%\begin{cases}
%\eta(0,t)=0,\,\,\eta(L,t)=0,\,\,\eta_{x}(0,t)=0,& \text{in} \,\, (0,+\infty),\\
%w(0,t)=0,\,\,w(L,t)=0,\,\,w_{x}(L,t)=g(t),& \text{in} \,\, (0,+\infty).
%\end{cases}
%\end{equation}
where $f(t)$ is the boundary control. We are mainly concerned with the following problem.
\vglue 0.2 cm
\noindent
\textit{{\bf Stabilization Problem:}
Can one find a linear feedback control law 
\begin{align*}
f(t) = F[(\eta(\cdot, t),w(\cdot,t)], \quad t \in (0,\infty),
\end{align*}
such that the closed-loop system \eqref{bb1} with boundary condition \eqref{b1.3} is exponentially stable?}
\vglue 0.2 cm

%------------------------------------------------------------------------
%%%%%%%%%%%%%%%%%%%%%%%%%%%%%%%%%%%%%%%%%%%%%%%%%%%%%%%%%%%%%%%%%%%
%------------------------------------------------------------------------

\subsection{Previous results} Abstract methods have been developed to obtain the rapid stabilization of linear partial differential equations. Among them, we cite the works \cite{komornik1997,urquiza2005,Vest}  based on the Gramian approach. In this paper we are interested to apply this method to design the feedback control law for the system \eqref{bb1}-\eqref{b1.3}. The method presented here was successfully applied by Cerpa and Cr\'epeau  in \cite{cerpa2009} to study the rapid stabilization of the KdV equation. Although this method is typically used in a single equation, this approach has not yet widespread applied to coupled systems.

Stability properties of systems \eqref{b1} or \eqref{bb1}  on a bounded domain have been studied by several authors. The pioneering work, for the system under consideration in this work, is due to Rosier and Pazoto in \cite{pazoto2008}. They showed the asymptotic behavior for the solutions of the system \eqref{bb1} satisfying the boundary conditions
\begin{equation}
\left\{
\begin{array}
[c]{lll}%
w(0,t)  =w_{xx}(0,t)  =0,\text{ } &  & \text{on $(0,T)$,}\\
w_{x}(0,t)  =\alpha_{0}\eta_{x}(0,t),  &  & \text{on $(0,T)$,}\\
w(L,t)  =\alpha_{2}\eta(L,t),  &  & \text{on $(0,T)$,}\\
w_{x}(L,t)  =-\alpha_{1}\eta_{x}(L,t),  &  & \text{on $(0,T)$,}\\
w_{xx}(L,t)  =-\alpha_{2}\eta_{xx}(L,t),  &  &\text{on $(0,T)$.}%
\end{array}
\right.  \label{int_32e_crp_1}%
\end{equation}
In (\ref{int_32e_crp_1}), $\alpha_{0}$, $\alpha_{1}$ and $\alpha_{2}$ denote some
nonnegative real constants.  Under the above boundary conditions, they observed that the derivative of the energy associated to the system (\ref{b1}), satisfies
\[
\frac{dE}{dt}    =-\alpha_{2}\left\vert \eta(   L,t)  \right\vert
^{2}-\alpha_{1}\left\vert \eta_{x}(   L,t)  \right\vert ^{2}%
-\alpha_{0}\left\vert \eta_{x}(   0,t)  \right\vert ^{2},
\]
where%
\[
E(   t)  =\frac{1}{2}\int_{0}^{L}(   \eta^{2}+w^{2})
dx\text{.}%
\]
This indicates that the boundary conditions play the role of a damping mechanism, at least for the linearized system.  In \cite{pazoto2008} the authors provides the following result.

\vglue 0.2 cm
\noindent
\textit{{\bf Theorem A }(Pazoto and Rosier  \cite{pazoto2008}) 
Assume that $\alpha_{0} \geq 0, \alpha_{1}>0$, and that $\alpha_{2}=1$ Then there exist two constants $C_{0}, \mu_{0}>0$ such that for any $\left(\eta_{0}, w_{0}\right) \in L^2(0,L)\times L^2(0,L)$, the solution of \eqref{bb1} with boundary condition \eqref{int_32e_crp_1} satisfies
$$\|(\eta(t), w(t))\|_{L^2(0,L)\times L^2(0,L)} \leq C_{0} \mathrm{e}^{-\mu_{0} t}\left\|\left(\eta_{0}, w_{0}\right)\right\|_{L^2(0,L)\times L^2(0,L)}, \quad \forall t \geq 0.$$}

Recently, Capistrano--Filho and Gallego \cite{caga2018} investigated the system \eqref{bb1} with two controls in the boundary conditions 
\begin{equation}\label{b1a}
\begin{cases}
\eta(0,t)=0,\,\,\eta(L,t)=0,\,\,\eta_{x}(0,t)=f(t),& \text{in} \,\, (0,+\infty),\\
w(0,t)=0,\,\,w(L,t)=0,\,\,w_{x}(L,t)=g(t),& \text{in} \,\, (0,+\infty),
\end{cases}
\end{equation}
and deal with the local rapid exponential stabilization by using the backstepping method. They designed boundary feedback controls  
\begin{align*} f(t)=F_1(\eta(\cdot,t), \omega(\cdot, t))\quad \text{and} \quad g(t)=F_2(\eta(\cdot,t), \omega(\cdot, t)),  
\end{align*} 
that leads to the stabilization of the system. The authors proved that the solution of the closed-loop system decays exponentially to zero in the $L^2(0,L)$--norm and the decay rate can be tuned to be as large as desired.
\vglue 0.2 cm
\noindent
\textit{{\bf Theorem B }(Capistrano--Filho and Gallego \cite{caga2018})  Let $L\in(   0,+\infty)  \backslash\mathcal{N}$ where 
\begin{equation}\label{criticalrapid}
\mathcal{N}:=\left\{\frac{2 \pi}{\sqrt{3}} \sqrt{k^{2}+k l+l^{2}} ; \quad k, l \in \mathbb{N}^{*}\right\} .
\end{equation}  
For every $\lambda>0$, there exist a continuous linear feedback control law $$F:=(F_1,F_2):L^2(0,L)\times L^2(0,L)\to\mathbb{R}\times\mathbb{R},$$ and positive constant  $C>0$. Then, for every $(\eta_0,w_0)\in L^2(0,L)\times L^2(0,L)$,
the solution $(\eta,w)$ of \eqref{bb1} with boundary conditions \eqref{b1a}
belongs to space $C([0,T];(L^2(0,L)\times L^2(0,L)))$ and satisfies}
\begin{equation*}
\|(\eta(t),w(t))\|_{L^2(0,L)\times L^2(0,L)} \leq C e^{-\frac{\lambda}{2} t} \|(\eta_0,w_0)\|_{L^2(0,L)\times L^2(0,L)}, \quad  \forall t\geq0.
\end{equation*}

\vglue 0.2 cm

It is important to emphasize that our goal in this paper is to stabilize the system \eqref{bb1} using only one feedback control, improving thus the result in \cite{caga2018}. Concerning controllability, the paper \cite{capistrano2016} studied different configurations for the position of the control, in particular they proved the following.
\vglue 0.2 cm
\noindent
\textit{{\bf Theorem C }(Capistrano--Filho \textit{et al.} \cite{capistrano2016}) Let $T>0$ and $L \in(0,+\infty) \backslash \mathcal{N}$. Then there exists some $\delta>0$ such that for all states $\left(\eta_{0}, w_{0}\right),\left(\eta_{1}, w_{1}\right) \in\left[L^{2}(0, L)\right]^{2}$ one can find a control $f \in L^{2}(0, T)$ such that the solution $$(\eta, w) \in C\left([0, T],\left[L^{2}(0, L)\right]^{2}\right) \cap L^{2}\left(0, T,\left[H^{1}(0, L)\right]^{2}\right)$$ of \eqref{bb1}-\eqref{b1.3} satisfies
$\eta(T, x)=\eta_{1}(x)$, $w(T, x)=w_{1}(x)$,  $x \in(0, L)$.}

\vspace{0.1cm}

As in the case of the KdV equation \cite[Lemma 3.5]{rosier}, when $L\in\mathcal{N}$, the linear system \eqref{bb1}-\eqref{b1.3} is not controllable. To prove Theorem C, the authors used the classical duality approach based upon the Hilbert Uniqueness Method (H.U.M.)  due to J.-L. Lions \cite{lions}, which reduces the exact controllability of the system to some observability inequality to be proved for the adjoint system. Then, to establish the required observability inequality, was used the compactness-uniqueness argument due to J.-L. Lions \cite{lions1} and some multipliers, which reduces the analysis to study a spectral problem. The spectral problem is finally solved by using a method introduced by Rosier in \cite{rosier}, based  on the use of complex analysis, namely, the Paley-Wiener theorem.

%------------------------------------------------------------------------
%%%%%%%%%%%%%%%%%%%%%%%%%%%%%%%%%%%%%%%%%%%%%%%%%%%%%%%%%%%%%%%%%%%
%------------------------------------------------------------------------

\subsection{Main result and outline of the work} Our result deals with the \textit{stabilization problem} already mentioned showing the following theorem.

\begin{thm}\label{main1} Let  $L\in(   0,+\infty)  \backslash\mathcal{N}$ and $\omega > 0$. Then, there exist a continuous linear map $$F_{\omega}: H_1    \longrightarrow \mathbb{R} $$ and a positive constant $C$, such that for every $(\eta_0,w_0)\in H_1$, the solution $(\eta,w)$ of the closed-loop system \eqref{bb1}-\eqref{b1.3}, with $f(t)=F_w(\eta(t),w(t))$ satisfies
\begin{equation*}
\|(\eta(t),w(t))\|_{H_1} \leq Ce^{-2\omega t}\|(\eta_0,w_0)\|_{H_1}, \quad \forall t\geq0.
\end{equation*}
\end{thm}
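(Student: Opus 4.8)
The plan is to recast the closed-loop problem in abstract first-order form $\dot z = \mathcal{A}z + \mathcal{B}f$ on the energy space $H_1$, where $z=(\eta,w)$ and $\mathcal{A} = \left(\begin{smallmatrix} 0 & P \\ P & 0 \end{smallmatrix}\right)$ with $P = -(\partial_x + \partial_x^3)$ and $D(\mathcal{A})$ encoding the homogeneous boundary conditions obtained from \eqref{b1.3} by setting $f=0$. The first step is to integrate by parts and check that, precisely because of the mismatched Dirichlet--Neumann conditions ($\eta_x(0)=0$ against $w_x(L)=0$), every boundary term cancels, so that $\mathcal{A}$ is skew-symmetric; one then identifies $D(\mathcal{A}^*)$ with $D(\mathcal{A})$ to upgrade this to skew-adjointness, and uses the $H^3$-type regularity of the domain together with Rellich's theorem to conclude that $\mathcal{A}$ has compact resolvent. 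The same integration by parts identifies the control operator through the boundary pairing, so that the observation dual to the control is the trace $w_x(0,t)$; in abstract terms $\mathcal{B}^*(\eta,w)$ is a multiple of $w_x(0)$.

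Since $\mathcal{A}$ is skew-adjoint with compact resolvent, its spectrum is a discrete set of purely imaginary eigenvalues and its eigenfunctions form an orthonormal basis of $H_1$; this is the spectral and Fourier picture on which Urquiza's Gramian construction rests. I would obtain the eigenvalues and eigenfunctions explicitly by solving the characteristic cubic associated to $P\phi = i\mu\phi$ and imposing the six boundary conditions, which yields a transcendental determinant equation for the eigenvalues. On an eigenfunction $\Phi_n$ the output reduces to a boundary trace of its second component, and the whole construction will hinge on this number being nonzero, uniformly in $n$.

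To invoke the abstract theorem of Urquiza I must verify two hypotheses for the pair $(\mathcal{A},\mathcal{B})$. The first is an admissibility (direct trace) estimate $\int_0^T |w_x(0,t)|^2\,dt \le C \|(\eta_0,w_0)\|_{H_1}^2$, which is the hidden-regularity / Kato smoothing bound already available for this KdV--KdV system. The second is the observability inequality $\int_0^T |w_x(0,t)|^2\,dt \ge c \|(\eta_0,w_0)\|_{H_1}^2$. The key observation is that this inequality is exactly the dual, through the Hilbert Uniqueness Method, of the exact controllability of \eqref{bb1}--\eqref{b1.3} with the single control $f$ established in Theorem C; it therefore holds precisely when $L \notin \mathcal{N}$. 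At the spectral level it is equivalent to a uniform lower bound on the traces $\mathcal{B}^*\Phi_n$, which degenerates exactly at the critical lengths $\mathcal{N}$ via the Paley--Wiener argument of Rosier. I expect this observability step to be the main obstacle: it is what guarantees that the single scalar output detects every eigenmode (in particular forcing all eigenvalues to be simple, since a single functional cannot separate a multidimensional eigenspace), and it is the only place where a single input could fail and where the hypothesis $L \notin \mathcal{N}$ is indispensable.

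With both hypotheses in hand, the last step is the Gramian construction. For $\omega>0$ define the symmetric positive operator $\Lambda_\omega$ through the quadratic form $\langle \Lambda_\omega^{-1} z, z\rangle = \int_0^\infty e^{-2\omega t} |\mathcal{B}^* e^{-t\mathcal{A}} z|^2\,dt$, set $F_\omega z = -\mathcal{B}^*\Lambda_\omega z$, and use the admissibility bound to check that $F_\omega$ is a continuous linear map $H_1 \to \R$. Urquiza's theorem then yields that $\mathcal{A} + \mathcal{B}F_\omega$ generates a $C_0$-semigroup with $\|e^{t(\mathcal{A}+\mathcal{B}F_\omega)}\|_{\mathcal{L}(H_1)} \le C e^{-2\omega t}$, the weight $e^{-2\omega t}$ in the Gramian being exactly what tunes the rate to the prescribed $2\omega$. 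It remains to confirm that the closed-loop system \eqref{bb1}--\eqref{b1.3} with $f = F_\omega(\eta,w)$ coincides with this abstract evolution and is well posed in $C([0,\infty);H_1)$, which holds because the feedback is an admissible perturbation; this gives the stated estimate, and since $\omega>0$ is arbitrary the decay rate can be taken as large as desired.
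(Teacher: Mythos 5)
Your overall architecture coincides with the paper's: the abstract form $\dot y=Ay+Bv$ with the skew-adjoint operator $A(\eta,w)=(-w'-w''',-\eta'-\eta''')$, the identification of the dual observation as $B^*(\eta,w)=-w_x(0)$, the verification of Urquiza's hypotheses (H1)--(H4), and the Gramian feedback $F_\omega=-B^*\Lambda_\omega^{-1}$ with weight $e^{-2\omega\tau}$ (your $\Lambda_\omega$ is the paper's $\Lambda_\omega^{-1}$, which is only a naming convention). The skew-adjointness, the discrete spectrum, and the final invocation of Theorem \ref{urquizamethod} are all as in Sections \ref{Sec3} and \ref{Sec5}.

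The genuine gap is in how you justify (H3) and (H4). Both estimates must hold in the \emph{same} norm, and that norm cannot be $X_0=[L^2(0,L)]^2$: on the $n$-th eigenfunction the observation satisfies $|u^{\pm}_{n,x}(0)|\sim C|n|\sim C(1+|\lambda_n|)^{1/3}$ (Proposition \ref{behavor2}), so testing (H3) on a single $X_0$-normalized eigenmode already makes the left-hand side blow up like $n^2$; this is exactly why the paper works in $H_1$, whose norm weights the $n$-th coefficient by $(1+|\lambda_n|)^{1/3}$. Once the state space is $H_1$, the observability you need is $\int_0^T|w_x(0,t)|^2\,dt\ge c\|(\eta_0,w_0)\|_{H_1}^2$, and this is \emph{strictly stronger} than the HUM dual of Theorem C: exact controllability in $[L^2]^2$ is equivalent to observability with the $[L^2]^2$ norm on the right-hand side, and since $\|\cdot\|_{X_0}\le\|\cdot\|_{H_1}$ the $H_1$ inequality does not follow from it. What is actually required --- and what constitutes the technical heart of the paper --- is the two-sided quantitative statement that $|v_{n,x}(0)|/(1+|\lambda_n|)^{1/3}$ is bounded above and below by positive constants uniformly in $n$: the asymptotics come from the explicit computation of the coefficients $a_j(n)$ in the Appendix, the non-vanishing at each fixed $n$ comes from Rosier's lemma and $L\notin\mathcal{N}$ (Claim \ref{claim2}, the only point where the critical lengths enter), and Ingham's inequality (Lemma \ref{ingham}), applicable thanks to the gap condition implied by \eqref{eee1}, converts these spectral bounds into the time-integrated estimates (H3)--(H4). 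Neither your appeal to ``already available hidden regularity / Kato smoothing'' (the paper stresses in Section \ref{sec6} that this system \emph{lacks} Kato smoothing, which is precisely why the nonlinear problem remains open) nor duality with Theorem C supplies these inputs; to complete your argument you would have to prove the trace asymptotics of Proposition \ref{behavor2} and run the Ingham argument in $H_1$.
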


Theorem \ref{main1} is showed using the result proved by Urquiza in \cite[Theorem 2.1]{urquiza2005}. To use it, first, we need a spectral analysis (see Section \ref{Sec3} for the definition of the space $H_1$) for the operator $A$  given by
\begin{gather*}
A(\eta,w)=(-w'-w''',-\eta'-\eta''')
\end{gather*}
and
\begin{gather*}
D(A)=\left\lbrace (\eta,w)\in [H^3(0,L)\cap H^1_0(0,L)]^2: \eta'(0)=w'(L)=0\right\rbrace.
\end{gather*}
It is important to emphasize that our control acts only on one equation and through a boundary condition, instead of four or two controls as in Theorem A and B, respectively. 

\vspace{0.1cm}

The content of this article is divided as follows. Section \ref{Sec2} is devoted to presenting the Urquiza's approach, which requires four hypotheses to be satisfied called (H1), (H2), (H3) and (H4). In Section \ref{Sec3} we deal with some preliminary results including the proof of (H1). We will note that (H2) is easily verified. Next, we prove the hypotheses (H3) and (H4) in Section \ref{Sec4}. Section \ref{Sec5} is dedicated to the construction of the feedback and to finish the proof of Theorem \ref{main1}.  Some final comments are provided in Section \ref{sec6}.

%------------------------------------------------------------------------
%%%%%%%%%%%%%%%%%%%%%%%%%%%%%%%%%%%%%%%%%%%%%%%%%%%%%%%%%%%%%%%%%%%
%------------------------------------------------------------------------

\section{ Urquiza's approach} \label{Sec2}
In this section, we present the Urquiza method \cite{urquiza2005} to prove rapid exponential stabilization of the following system 
\begin{equation}\label{bb1a}
\begin{cases}
\eta_t + w_x+w_{xxx}= 0, & \text{in} \,\, (0,L)\times (0,+\infty),\\
w_t +\eta_x +\eta_{xxx} =0,  & \text{in} \,\, (0,L)\times (0,+\infty), \\
\eta(x,0)= \eta_0(x), \quad w(x,0)=  w_0(x), & \text{in} \,\, (0,L),
\end{cases}
\end{equation}
with boundary conditions
\begin{equation}\label{b1.3a}
\begin{cases}
\eta(0,t)=0,\,\,\eta(L,t)=0,\,\,\eta_{x}(0,t)=f(t),& \text{in} \,\, (0,+\infty),\\
w(0,t)=0,\,\,w(L,t)=0,\,\,w_{x}(L,t)=0,& \text{in} \,\, (0,+\infty).
\end{cases}
\end{equation}

We will extensively use the space operator given by 
\begin{gather}\label{operatorA}
A(\eta,w)=(-w'-w''',-\eta'-\eta'''),
\end{gather}
with domain
\begin{gather}\label{operatorA1}
D(A)=\left\lbrace (\eta,w)\in [H^3(0,L)\cap H^1_0(0,L)]^2: \eta'(0)=w'(L)=0\right\rbrace
\end{gather}
where
\begin{gather*}
D(A)\subset X_0:=[L^2(0,L)]^2 .
\end{gather*}

\subsection{Gramian method} Let us first write our system in the abstract framework. Set $A$ the operator defined by \eqref{operatorA}-\eqref{operatorA1} and $B$ given by
\begin{equation}\label{operatorB}
\begin{array}{c c c l }
B: & \mathbb{R}  & \longrightarrow &  D(A^*)'  \\
 & s & \longmapsto & B s 
\end{array}
\end{equation}
where $s\in \R$ and $Bs$ is a functional given by 
\begin{equation}\label{operatorB1}
\begin{array}{c c c l }
Bs: &  D(A^*)  & \longrightarrow &  \R \\
 & (u,v)& \longmapsto &  Bs(u,v):= - sv_x(0). 
\end{array}
\end{equation}
Since $D(A)=D(A^*)$ are closed and dense in $X_0$, we have that
\begin{equation}\label{adjunt}
\begin{array}{c c c l }
B^*: & D(A)   & \longrightarrow &  \mathbb{R} \\
 & (u,v) & \longmapsto & B^* (u,v)  =-v_x(0).
\end{array}
\end{equation}

Note that system \eqref{bb1a}-\eqref{b1.3a} takes the abstract form 
\begin{equation}\label{abstr}
\begin{cases}
\dot{y}(x,t)=Ay(x,t)+Bv(t), & \text{in $[D(A^*)]',$}\\
y(x,0)=y^0(x).
\end{cases}
\end{equation}
Here $y^0=(\eta_0,w_0)\in X_0$ is the initial condition  and the control is $v(t)=-f(t)$.

In order to get the rapid exponential stabilization, we use the Urquiza approach \cite{urquiza2005}. Let us explain the method on the abstract control system \eqref{abstr} with state $y(t)$ in a Hilbert space $Y$ and control $s(t)$ in a Hilbert space $U$. Here, the initial condition $y^0  \in  Y$, $A$ is a skew-adjoint operator  in $Y$ whose domain is dense in $Y$ , and $B$ is an unbounded operator from $U$ to $Y$ . 

The method to prove rapid stabilization consists on building a feedback control using the following four hypothesis for the operators $A$ and $B$:
\begin{enumerate}
\item[(H1)] The skew-adjoint operator $A$ is an infinitesimal generator of a strongly continuous group in the state space $Y$.
\vspace{0.1cm}
\item[(H2)] The operator $B:U \rightarrow D(A^*)'$ is linear and continuous.
\vspace{0.1cm}
\item[(H3)] \textit{(Regularity property)} For every $T>0$ there exists $C_T>0$ such that 
\begin{equation*}
\int_0^T \|B^*e^{-tA^*}y\|^2_{U}dt\leq C_T\|y\|_{Y}^2, \quad \forall y \in D(A^*).
\end{equation*}
\item[(H4)] \textit{(Observability property)} There exist $T>0$ and $c_T>0$ such that
\begin{equation*}
\int_0^T \|B^*e^{-tA^*}y\|^2_{U}dt\geq c_T\|y\|_{Y}^2, \quad \forall y \in D(A^*).
\end{equation*}
\end{enumerate}
With these hypotheses in hand, the next result holds. Its proof mainly relies on general results about the algebraic Riccati equation associated with the linear quadratic regulator problem (see \cite{lasiecka1988}).

\begin{thm}(See \cite[Theorem 2.1]{urquiza2005})
\label{urquizamethod}
 Consider operators $A$ and $B$ under assumptions (H1)-(H4). For any $\omega>0$, we have
\begin{enumerate}
\item[(i)] The symmetric positive operator $\Lambda_{\omega}$ defined by 
\begin{align*}
(\Lambda_{\omega}x,z)_{Y}= \int_0^{\infty} \left(B^*e^{-\tau (A+\omega I)^*}x, B^*e^{-\tau (A+\omega I)^*}z\right)_{U}d\tau, \quad \forall x,z \in Y,
\end{align*}
is coercive and is an isomorphism on $ Y$.
\vspace{0.1cm}
\item[(ii)] Let $F_{\omega}:=-B^*\Lambda_{\omega}^{-1}$. The operator $A+BF_{\omega}$ with $D(A+BF_{\omega})=\Lambda_{\omega}(D(A^*))$ is the infinitesimal generator of a strongly continuous semigroup on $Y$.
\vspace{0.1cm}
\item[(iii)]The closed-loop system (system \eqref{abstr} with the feedback law $v=F_{\omega}(y)$) is exponentially stable with a decay equals to $2\omega$, that is, 
\begin{align*}
\exists C>0, \forall y \in Y, \quad \|e^{t(A+BF_{\omega})}y\|_{ Y}\leq C e^{-2\omega t}\|y\|_{ Y}.
\end{align*}
\end{enumerate}
\end{thm}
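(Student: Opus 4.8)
The plan is to verify the three assertions by constructing the Gramian $\Lambda_{\omega}$ explicitly, showing it is a coercive isomorphism, and then reading off the feedback and the decay from the algebraic Lyapunov (Riccati) identity that $\Lambda_{\omega}$ satisfies. The one structural fact I would use everywhere is that, by (H1) and Stone's theorem, $A$ generates a \emph{unitary} group; hence $A^{*}=-A$ and $e^{-\tau(A+\omega I)^{*}}=e^{-\omega\tau}e^{\tau A}$ with $\|e^{\tau A}x\|_{Y}=\|x\|_{Y}$, so that the integrand defining $\Lambda_{\omega}$ is simply $e^{-2\omega\tau}\,(B^{*}e^{-\tau A^{*}}x,\,B^{*}e^{-\tau A^{*}}z)_{U}$. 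This is what makes the improper integral manageable.

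For (i) I would first bound $\Lambda_{\omega}$. Splitting $[0,\infty)=\bigcup_{n\ge 0}[nT,(n+1)T]$ and combining the group property with the regularity estimate (H3) on each slab gives $\int_{nT}^{(n+1)T}\|B^{*}e^{-\tau A^{*}}x\|_{U}^{2}\,d\tau\le C_{T}\|x\|_{Y}^{2}$; inserting $e^{-2\omega\tau}\le e^{-2\omega nT}$ and summing the geometric series of ratio $e^{-2\omega T}<1$ bounds $(\Lambda_{\omega}x,x)_{Y}$ by $\frac{C_{T}}{1-e^{-2\omega T}}\|x\|_{Y}^{2}$. As the defining form is symmetric, $\Lambda_{\omega}$ is a bounded self-adjoint operator. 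Coercivity then follows from the observability hypothesis (H4): discarding the tail and using $e^{-2\omega\tau}\ge e^{-2\omega T}$ on $[0,T]$ yields $(\Lambda_{\omega}x,x)_{Y}\ge e^{-2\omega T}c_{T}\|x\|_{Y}^{2}$. A bounded, self-adjoint, coercive operator is boundedly invertible (Lax--Milgram/Riesz), which is exactly the isomorphism claim.

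The algebraic core is the Lyapunov identity. Writing $\Lambda_{\omega}=\int_{0}^{\infty}e^{-\tau(A+\omega I)}BB^{*}e^{-\tau(A+\omega I)^{*}}\,d\tau$ and differentiating the integrand in $\tau$, the fundamental theorem of calculus (the boundary term at $\infty$ vanishing by the exponential decay) gives
\begin{equation*}
(A+\omega I)\Lambda_{\omega}+\Lambda_{\omega}(A+\omega I)^{*}=BB^{*},
\end{equation*}
equivalently $A\Lambda_{\omega}+\Lambda_{\omega}A^{*}+2\omega\Lambda_{\omega}=BB^{*}$; conjugating by $\Lambda_{\omega}^{-1}$ turns this into $\Lambda_{\omega}^{-1}A+A^{*}\Lambda_{\omega}^{-1}+2\omega\Lambda_{\omega}^{-1}=\Lambda_{\omega}^{-1}BB^{*}\Lambda_{\omega}^{-1}$. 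With $F_{\omega}=-B^{*}\Lambda_{\omega}^{-1}$ and the renormalized inner product $(x,z)_{\sharp}:=(\Lambda_{\omega}^{-1}x,z)_{Y}$ (equivalent to that of $Y$ by (i)), the closed-loop operator $A+BF_{\omega}=A-BB^{*}\Lambda_{\omega}^{-1}$ on $\Lambda_{\omega}(D(A^{*}))$ satisfies $2\,\mathrm{Re}\,(x,(A+BF_{\omega})x)_{\sharp}=-2\omega(\Lambda_{\omega}^{-1}x,x)_{Y}-\|B^{*}\Lambda_{\omega}^{-1}x\|_{U}^{2}\le 0$, so it is dissipative for $(\cdot,\cdot)_{\sharp}$; together with the range condition furnished by the invertibility of $\Lambda_{\omega}$, Lumer--Phillips gives (ii). For (iii) the same computation applied to $V(y):=(\Lambda_{\omega}^{-1}y,y)_{Y}$ along a trajectory gives $\tfrac{d}{dt}V(y(t))\le -2\omega V(y(t))$, whence by Gronwall $V(y(t))\le e^{-2\omega t}V(y^{0})$, and the norm equivalence $V\simeq\|\cdot\|_{Y}^{2}$ yields the exponential decay asserted in (iii).

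The genuinely delicate point, and the one I would spend most care on, is not the algebra but the functional-analytic bookkeeping forced by the unboundedness of $B$, which maps into $D(A^{*})'$ rather than into $Y$. The differentiation under the integral sign, the Lyapunov identity above, and the domain description $D(A+BF_{\omega})=\Lambda_{\omega}(D(A^{*}))$ are only formal until every product is interpreted weakly and one checks that (H3) is precisely the admissibility making $B^{*}e^{-tA^{*}}$ a bounded trace operator into $L^{2}(0,T;U)$. These are exactly the ingredients governed by the linear--quadratic regulator and algebraic Riccati theory for unbounded control operators, so I would invoke that framework (as in \cite{lasiecka1988}) to legitimize the manipulations rather than reprove them by hand.
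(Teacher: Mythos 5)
The paper does not actually prove this statement: it is quoted verbatim from Urquiza \cite[Theorem 2.1]{urquiza2005}, with only the remark that the proof rests on the algebraic Riccati theory of \cite{lasiecka1988}. So your proposal is being measured against Urquiza's own argument. Your part (i) is correct and is exactly the standard proof: the slab decomposition of $[0,\infty)$ combined with the unitarity of $e^{tA}$ (Stone's theorem applied to (H1)) gives the upper bound from (H3) and a geometric series, and (H4) gives coercivity on $[0,T]$; Lax--Milgram then yields the isomorphism. Your derivation of the Lyapunov identity $(A+\omega I)\Lambda_{\omega}+\Lambda_{\omega}(A+\omega I)^{*}=BB^{*}$ is also the right algebraic core, and your honesty about the weak interpretation of $BB^{*}$ and the role of (H3) as an admissibility condition matches what the cited references actually handle.

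However, there is a genuine gap in (iii): your argument proves only half of the claimed decay rate. From $\frac{d}{dt}V(y(t))\le -2\omega V(y(t))$ with $V(y)=(\Lambda_{\omega}^{-1}y,y)_{Y}$ you obtain $V(y(t))\le e^{-2\omega t}V(y^{0})$; but $V$ is equivalent to $\|\cdot\|_{Y}^{2}$, not to $\|\cdot\|_{Y}$, so this yields $\|y(t)\|_{Y}\le Ce^{-\omega t}\|y^{0}\|_{Y}$, whereas the theorem asserts $Ce^{-2\omega t}$. The doubling of the rate is precisely the nontrivial selling point of the Komornik--Urquiza Gramian method, and it does not come from a Lyapunov estimate: it comes from a similarity transformation. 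Rewriting the Lyapunov identity as $(A-BB^{*}\Lambda_{\omega}^{-1})\Lambda_{\omega}=\Lambda_{\omega}(-A^{*}-2\omega I)$ shows that $A+BF_{\omega}=\Lambda_{\omega}\,(-A^{*}-2\omega I)\,\Lambda_{\omega}^{-1}=\Lambda_{\omega}\,(A-2\omega I)\,\Lambda_{\omega}^{-1}$ (using $A^{*}=-A$), whence $e^{t(A+BF_{\omega})}=e^{-2\omega t}\,\Lambda_{\omega}e^{tA}\Lambda_{\omega}^{-1}$ and $\|e^{t(A+BF_{\omega})}\|\le\|\Lambda_{\omega}\|\,\|\Lambda_{\omega}^{-1}\|\,e^{-2\omega t}$ because $e^{tA}$ is unitary. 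This similarity also delivers (ii) directly, since the closed-loop operator is conjugate to the generator $A-2\omega I$ with domain $\Lambda_{\omega}(D(A^{*}))$, making the Lumer--Phillips step unnecessary. Your dissipativity computation is not wrong --- it shows the closed loop is a contraction for the $\Lambda_{\omega}^{-1}$-inner product --- but it cannot produce the factor $2\omega$ in the exponent of the norm estimate.
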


In order to apply this method, we have to verify the four hypotheses. This will be done in the next sections. 

%------------------------------------------------------------------------
%%%%%%%%%%%%%%%%%%%%%%%%%%%%%%%%%%%%%%%%%%%%%%%%%%%%%%%%%%%%%%%%%%%
%------------------------------------------------------------------------

\section{Preliminaries}\label{Sec3}
To apply Theorem \ref{urquizamethod} to our linear Boussinesq control system is necessary to check the hypotheses (H1)-(H4).  First, we will prove that operator $A$ defined by \eqref{operatorA} satisfies (H1). Note that by the definition of the operator $B$, see \eqref{operatorB1}, it is easy to see that (H2) also follows true.   Moreover, in this section we establish the asymptotic behavior of some eigenfunctions and give the definition of some useful spaces.

\subsection{Hypothesis (H1)}We first comment that in order to verify hypothesis (H1) it is enough to prove that $A$ is skew-adjoint in $X_0$.
\begin{prop}\label{prop1}
$A$ is a skew-adjoint $X_0$ and thus generates a group of isometries $(e^{tA})_{t\in \R}$ in $X_0$.
\end{prop}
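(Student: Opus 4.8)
To show that $A$ is skew-adjoint on $X_0 = [L^2(0,L)]^2$, I would verify two things: first that $A$ is skew-symmetric (i.e. $A^* \supseteq -A$, equivalently $(Ay, z)_{X_0} = -(y, Az)_{X_0}$ for all $y, z \in D(A)$), and second that the domains actually coincide, $D(A^*) = D(A)$, so that $A^* = -A$. Once $A^* = -A$ is established, Stone's theorem immediately yields that $A$ generates a strongly continuous group of unitary operators (isometries) $(e^{tA})_{t \in \R}$, which is exactly the claim; this last implication is the standard citation and requires no work.

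\textbf{Skew-symmetry.} First I would take $(\eta, w), (u, v) \in D(A)$ and compute
\begin{equation*}
(A(\eta,w),(u,v))_{X_0} = \int_0^L (-w' - w''')\,u \, dx + \int_0^L (-\eta' - \eta''')\,v\, dx,
\end{equation*}
then integrate by parts to move all derivatives off $w$ and $\eta$ onto $u$ and $v$. The first-order terms $-\int w' u$ and $-\int \eta' v$ transfer with a single integration by parts, and the third-order terms $-\int w''' u$ and $-\int \eta''' v$ transfer with three integrations by parts. The interior integrals reassemble into $((\eta, w), A(u,v))_{X_0}$ with the opposite sign, so everything hinges on the boundary terms. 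The boundary conditions defining $D(A)$ are exactly $\eta(0) = \eta(L) = w(0) = w(L) = 0$ together with $\eta'(0) = w'(L) = 0$, and the same for $(u,v)$. I expect every boundary contribution — the terms evaluated at $0$ and $L$ involving products like $w''u$, $w'u'$, $wu''$, and the analogous $\eta$--$v$ terms — to vanish once these eight homogeneous conditions are imposed on both pairs. Carrying out this boundary bookkeeping carefully is the bulk of the routine computation, but it should close cleanly; the coupled structure (the $\eta$-equation involves $w$ and vice versa) is what makes the two blocks combine into the single operator $-A$.

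\textbf{Equality of domains.} The genuinely delicate step, and the one I would flag as the main obstacle, is proving the reverse inclusion $D(A^*) \subseteq D(A)$ rather than merely $A \subseteq -A^*$. Here I would take $(u,v) \in D(A^*)$, so that there exists $(f,g) \in X_0$ with $(A(\eta,w),(u,v))_{X_0} = ((\eta,w),(f,g))_{X_0}$ for all $(\eta,w) \in D(A)$. Testing against smooth compactly supported $(\eta,w)$ identifies $(f,g)$ distributionally and forces $(u,v) \in [H^3(0,L)]^2$ (the elliptic-type regularity coming from the third-order operator). Then, allowing $(\eta,w) \in D(A)$ with nonzero boundary data, the integration-by-parts boundary terms must all vanish; by varying the admissible boundary values of $(\eta,w)$ independently I would read off, one at a time, that $(u,v)$ satisfies precisely the six boundary conditions $u(0)=u(L)=v(0)=v(L)=0$ and $u'(0)=v'(L)=0$, i.e. $(u,v) \in D(A)$. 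The subtlety is checking that the boundary conditions on $D(A)$ supply exactly enough free data to recover all six constraints on $(u,v)$ and no more — i.e. that the operator is neither under- nor over-determined, which is the reason the specific combination of conditions in \eqref{b1.3a} (control on $\eta_x(0)$, homogeneous $w_x(L)$) is the right one. I would organize this by listing the full set of boundary terms from the threefold integration by parts and matching them against the constraints, which simultaneously confirms skew-symmetry and pins down $D(A^*)$.
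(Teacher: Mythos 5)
Your proposal is correct and follows essentially the same route as the paper: integration by parts to get $-A\subset A^*$, then for $(u,v)\in D(A^*)$ testing against compactly supported functions to obtain $H^3$ regularity, and finally reading off the boundary conditions from the vanishing of the boundary terms to conclude $D(A^*)=D(A)$, with Stone's theorem giving the group of isometries. The boundary conditions you predict for $D(A^*)$ are exactly the ones the paper derives, so the domains coincide as you anticipate.
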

\begin{proof}
First, it  is clear that $D(A)$ is dense in $X_0$. We have to prove that $A^*=-A$. Note that we have $-A\subset A^*$ (i.e. $(\theta , u)\in D(A^*)$ and  
$A^*(\theta , u) =-A(\theta ,u)$ for all $(\theta ,u)\in D(A)$). Indeed, 
for any $(\eta , v),(\theta , u)\in D(A)$, we have after some integration by parts that 
$$((\theta , u) ,A(\eta ,v))_{X_0}= -(A(\theta, u), (\eta , v))_{X_0}.$$

Let us prove now that $A^*\subset -A$. Pick any $(\theta , u ) \in D(A^*)$. Then,  we have for some constant 
$C>0$ 
\[
\left\vert ( (\theta , u ) , A(\eta , v))_{X_0}\right\vert \le C \Vert (\eta , v)\Vert _{X_0}\quad \forall (\eta , v) \in D(A), 
\] 
i.e.
\begin{equation}
\label{A2}
\left\vert \int _0^L  [ \theta (v_x+v_{xxx} ) + u (\eta _x + \eta _{xxx}) ]dx \right\vert 
  \le C \left(  \int_0^L [\eta ^2 + v^2 ]dx\right)^\frac{1}{2},
\qquad \forall (\eta , v)\in D(A). 
\end{equation}
Picking $v=0$ and $\eta \in C_c^\infty (0,L)$, we infer from \eqref{A2} that $u_x+u_{xxx}\in L^2(0,L)$, and hence that 
$u\in H^3(0,L)$. Similarly, we obtain that $\theta \in H^3(0,L)$. 
Integrating by parts in the left hand side  of \eqref{A2}, we obtain that 
\begin{eqnarray*}
&&\left\vert \theta (L) v_{xx}(L) -\theta (0)v_{xx}(0) + \theta _x (0)v_x(0) 
+ u(L) \eta _{xx}(L)  -u(0) \eta _{xx} (0) -u_x(L) \eta _x(L) \right\vert \\
&& \qquad \le C\left( \int_0^L [\eta ^2 + v^2] dx \right) ,\qquad \forall (\eta , v) \in D(A) . 
\end{eqnarray*}
It easily follows that 
\[
\theta (0)=\theta (L)=\theta _x(0)=u(0)=u(L)=u_x(L)=0,
\]
so that $(\theta , u)\in D(A)=D(-A)$. Thus $D(A^*)=D(-A)$ and $A^*=-A$, which ends the proof of this proposition.
\end{proof}

\subsection{Behavior of the traces}
As can be seen in \cite{capistrano2016}, operator $A$ has a compact resolvent and it can be diagonalized in an orthonormal basis, i.e., the spectrum $\sigma(A)$ of $A$ consists only of eigenvalues and the eigenfunctions form an orthonormal basis of $X_0$. Thus, due to the results presented in \cite{capistrano2016}, there exists an orthonormal basis $\left\lbrace (\theta^+_n,u_n^+)_{n\in \Z} \cup  (\theta^-_n,u_n^-)_{n\in \Z}\right\rbrace$ in $[L^2_{\C}(0,L)]^2$, endowed with the natural scalar product $$\left((\theta, u), (\varphi,\omega)\right)=\int_0^L\left(\theta(x)\overline{\varphi(x)}+u(x)\overline{\omega(x)}\right)dx,$$ composed of eigenfunctions of $A$ satisfying
\begin{align*}
A (\theta^+_n,u_n^+) &= i\lambda_n  (\theta^+_n,u_n^+)
\end{align*}
and
\begin{align*}
A (\theta^-_n,u_n^-) &= (-i\lambda_n ) (\theta^-_n,u_n^-),
\end{align*}
where the real numbers $\{\lambda_n\}_{n\in \Z}$ have the following asymptotic forms
\begin{align}\label{eee1}
\lambda_n = \begin{cases}
\left( \dfrac{\pi+12\pi(k_1+n)}{6L}\right)^3 + O(n), & \text{as $n \rightarrow +\infty$} \\
\\
-\left( \dfrac{7\pi+12\pi(k_2-n)}{6L}\right)^3 + O(n), & \text{as $n \rightarrow -\infty$} \\
\end{cases}
\end{align}
for some numbers $k_1,k_2 \in \Z.$ Next result provides the behavior of boundary traces associated with the orthonormal basis 
$\left\lbrace (\theta^+_n,u_n^+)_{n\in \Z} \cup  (\theta^-_n,u_n^-)_{n\in \Z}\right\rbrace$. The proof is given in Appendix \ref{appendix}.
\begin{prop}\label{behavor2}
There exist positive constants $C_1^{\pm}$ and $C_2^{\pm}$, such that  
\begin{equation}\label{behaveigenfunctions}
\lim_{|n|\rightarrow \infty} \frac{| \theta^{\pm}_{n,x}(L))|}{|n|}=C_1^{\pm} \qquad  \text{and} \qquad \lim_{|n|\rightarrow \infty} \frac{| u_{n,x}^{\pm}(0)|}{|n|}=C_2^{\pm}.
\end{equation}
\end{prop}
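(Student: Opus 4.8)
The plan is to diagonalize the coupled eigenvalue problem into two scalar third–order ODEs, solve them explicitly through their characteristic roots, and then read off the two boundary traces by an asymptotic analysis as $|\lambda_n|\to\infty$, finally converting the $\lambda^{1/3}$–growth into growth in $|n|$ via \eqref{eee1}. First I would decouple. Writing $A(\theta,u)=i\lambda(\theta,u)$ as the system $-u'-u'''=i\lambda\theta$, $-\theta'-\theta'''=i\lambda u$ and setting $p:=\theta+u$, $q:=\theta-u$, one obtains the two independent equations
\[
p'''+p'+i\lambda p=0,\qquad q'''+q'-i\lambda q=0 .
\]
The boundary conditions defining $D(A)$ translate into $p(0)=q(0)=p(L)=q(L)=0$ together with the coupling relations $p'(0)+q'(0)=0$ and $p'(L)=q'(L)$. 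Since $\theta_x=\tfrac12(p'+q')$ and $u_x=\tfrac12(p'-q')$, these coupling relations give the clean identities $\theta_x(L)=p'(L)$ and $u_x(0)=p'(0)$, so everything reduces to estimating $|p'(0)|$ and $|p'(L)|$ for an $L^2$–normalized eigenfunction.

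Next I would solve each ODE explicitly. The characteristic polynomial $r^3+r+i\lambda=0$ has roots $r_1,r_2,r_3$ that, for $\lambda>0$ large, are asymptotic to the cube roots of $-i\lambda$: a growing mode $r_1\sim\lambda^{1/3}e^{-i\pi/6}$ (real part $+\tfrac{\sqrt3}{2}\lambda^{1/3}$), a purely oscillatory mode $r_2\sim i\lambda^{1/3}$ (with $\mathrm{Re}\,r_2=O(\lambda^{-1})$ to the next order), and a decaying mode $r_3\sim\lambda^{1/3}e^{7i\pi/6}$ (real part $-\tfrac{\sqrt3}{2}\lambda^{1/3}$). Writing $p=\sum_j c_j e^{r_jx}$ and imposing $p(0)=p(L)=0$ fixes $p$ up to one scalar $\alpha$. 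The roots of the $q$–equation are the complex conjugates of the $r_j$, so one may take $\psi_q=\overline{\psi_p}$; the remaining coupling relations then form a $2\times2$ system for the scalars $\alpha,\beta$ whose vanishing determinant reproduces the eigenvalue condition. Solving the $p(0)=p(L)=0$ system shows that, after normalizing the oscillatory coefficient to $1$, the growing–mode coefficient is exponentially small while the decaying–mode coefficient is $\approx-1$; thus $\psi_p$ is a bulk oscillation $e^{r_2x}$ plus two boundary layers of width $\lambda^{-1/3}$ at $x=0$ and $x=L$.

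I would then extract the asymptotics. A short computation with these coefficients gives $\psi_p'(0)\sim r_2-r_3$ and $\psi_p'(L)\sim(r_2-r_1)e^{r_2L}$, whence $|\psi_p'(0)|\sim|\psi_p'(L)|\sim\sqrt3\,\lambda^{1/3}$, while the $L^2$ norm is dominated by the oscillatory mode, $\|\psi_p\|^2\to L$ (the boundary layers and all cross terms contribute only $O(\lambda^{-1/3})$). Because the $q$–data are the conjugates of the $p$–data, one has $|\psi_q'(0)|=|\psi_p'(0)|$, so the coupling relation $\beta=-\alpha\,\psi_p'(0)/\psi_q'(0)$ forces $|\beta|=|\alpha|$, and the normalization $\tfrac12(\|p\|^2+\|q\|^2)=1$ yields $|\alpha|^2\to 1/L$. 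Hence $|\theta_x(L)|=|\alpha|\,|\psi_p'(L)|\sim\sqrt{3/L}\,\lambda^{1/3}$ and likewise $|u_x(0)|\sim\sqrt{3/L}\,\lambda^{1/3}$. Finally \eqref{eee1} gives $|\lambda_n|^{1/3}\sim\tfrac{2\pi}{L}\,|n|$ as $n\to\pm\infty$, which turns the $\lambda^{1/3}$–growth into linear growth in $|n|$ and produces the finite positive limits $C_1^{\pm},C_2^{\pm}$. The family $n\to-\infty$ is handled identically; indeed, since $A$ is real, $(\theta^-_n,u^-_n)=(\overline{\theta^+_n},\overline{u^+_n})$, which is why the two families give matching constants at leading order.

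The main obstacle is the bookkeeping in the third step: the coefficients $c_j$ carry exponentially large and small factors that must be combined exactly, so that the growing mode, although negligible in the interior and at $x=0$, still contributes at order $\lambda^{1/3}$ to the trace $\psi_p'(L)$ and is precisely what enforces $p(L)=0$. One must therefore retain the subleading corrections to the roots $r_j$ and to the coefficients long enough to verify that the oscillatory part dominates $\|\psi_p\|^2$ while the oscillatory and layer contributions to $|p'(0)|$ and $|p'(L)|$ are captured at the correct order before passing to the limit.
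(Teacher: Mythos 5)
Your proposal is correct and follows essentially the same route as the paper's Appendix~\ref{appendix}: reduce to scalar third--order ODEs, write the eigenfunctions explicitly in terms of the three characteristic roots (one oscillatory, one growing, one decaying, all of modulus $\sim|\lambda_n|^{1/3}$), use the boundary conditions to show one coefficient is exponentially small, pin the amplitude by the $L^2$--normalization, extract the traces at order $\lambda_n^{1/3}$ (correctly keeping the growing mode's contribution to the trace at $x=L$), and convert to linear growth in $|n|$ via \eqref{eee1}. The only difference is the parametrization of the decoupling --- you use $p=\theta+u$, $q=\theta-u$ with the coupling pushed into a $2\times2$ system for the amplitudes, whereas the paper imports the representation \eqref{uv} in terms of a single function $v_n$ from \cite{capistrano2016} --- and this does not change the substance of the argument.
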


\subsection{Definition of ad-hoc spaces}  In order to verify later the hypothesis (H3) and (H4), we have to change the state space $X_0$. Since $$\left\lbrace (\theta^+_n,u_n^+)_{n\in \Z} \cup  (\theta^-_n,u_n^-)_{n\in \Z}\right\rbrace$$ is an orthonormal basis in $[L^2_{\C}(0,L)]^2$, we have that for any $(f,g) \in [L^2_{\C}(0,L)]^2$, there exists a unique sequence $\{c^{n,+}_{f,g}\}_{n\in \Z} \cup \{c^{n,-}_{f,g}\}_{n\in \Z}$, with $\sum_{n\in\Z}\left( |c^{n,+}_{f,g}|^2 +|c^{n,-}_{f,g}|^2\right)<\infty$, such that
$$
(f,g)=\sum_{n\in\Z} \left( c^{n,+}_{f,g}(\theta^+_n,u_n^+) + c^{n,-}_{f,g}(\theta^-_n,u_n^-) \right)$$
and $$ \|(f,g)\|_{[L^2_{\C}(0,L)]^2} =\left( \sum_{n\in\Z}|c^{n,+}_{f,g}+c^{n,-}_{f,g}|^2\right)^{1/2}.$$

Let $Z=span\{    (\theta^+_n,u_n^+) \cup  (\theta^-_n,u_n^-) \}$. For any $s \in \R$, consider the norm 
\begin{align*}
\left\|\sum_{n\in\Z} c^{n,+}  (\theta^+_n,u_n^+) + c^{n,-}  (\theta^-_n,u_n^-)\right\|_s:=  \left( \sum_{n\in\Z} (1+|\lambda_k|)^{\frac{2}{3}s}(|c^{n,+}+c^{n,-}|^2)\right)^{\frac12}.
\end{align*}
Let us now define the $H_s$ spaces, for $s \in \R$.
\begin{definition}\label{spacesH}
The spaces $H_s$ will be defined as the completion of $Z$ with respect of the norm $\|\cdot\|_s$.
In each space $H_s$, one has the orthonormal basis $$\{(1+|\lambda_n|)^{-\frac{s}{3}}\theta^+_n, (1+|\lambda_n|)^{-\frac{s}{3}} u_n^+ \}_{n\in\Z}\cup \{(1+|\lambda_n|)^{-\frac{s}{3}}\theta^-_n, (1+|\lambda_n|)^{-\frac{s}{3}} u_n^- \}_{n\in\Z}.$$
\end{definition}

%------------------------------------------------------------------------
%%%%%%%%%%%%%%%%%%%%%%%%%%%%%%%%%%%%%%%%%%%%%%%%%%%%%%%%%%%%%%%%%%%
%------------------------------------------------------------------------

\section{Proof of hypothesis (H3) and (H4)} \label{Sec4} In this section we are interested to prove the hypothesis (H3) and (H4). We start presenting some auxiliary results that will be used to prove the regularity condition and observability inequality, respectively.

\subsection{Auxiliary lemmas} To find the regularity needed and to prove the observability inequality we use the following classical Ingham inequality, see e.g., \cite{ingham1936} and \cite{komornik2005} for details.
\begin{lemma}\label{ingham}
Let $T>0$ and $\{\beta_n\}_{n\in \Z}\subset\R$ be a sequence of pairwise distinct real numbers such that 
\begin{align*}
\lim_{|n|\rightarrow \infty} (\beta_{n+1}-\beta_n)=+\infty.
\end{align*}
Then, the series  $$h(t)=\sum_{n\in\Z}\gamma_ne^{i\beta_n t} \text{ converges in }L^2(0,T), $$ for any sequence $\{\gamma_n\}_{n\in\Z}$ satisfying $\sum_{n\in\Z}\gamma_n^2<\infty$. Moreover, there exist two strictly positive constant $C_1$ and $C_2$ such that,
\begin{align*}
C_1\sum_{n\in\Z}\gamma_n^2 \leq \int_0^T |h(t)|^2dt \leq C_2 \sum_{n\in\Z}\gamma_n^2.
\end{align*}
\end{lemma}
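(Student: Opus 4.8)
The plan is to recast the two inequalities as the statement that the Gram matrix of the family $\{e^{i\beta_n t}\}_{n\in\Z}$ in $L^2(0,T)$ is bounded and bounded below on $\ell^2(\Z)$. Writing $G=(G_{mn})$ with $G_{mn}=\int_0^T e^{i(\beta_m-\beta_n)t}\,dt$, one has $\int_0^T|h|^2\,dt=\langle G\gamma,\gamma\rangle_{\ell^2}$ for $\gamma=\{\gamma_n\}$, so the lemma is equivalent to $C_1\|\gamma\|_{\ell^2}^2\le\langle G\gamma,\gamma\rangle\le C_2\|\gamma\|_{\ell^2}^2$. The first thing I would record is that the hypothesis $\beta_{n+1}-\beta_n\to+\infty$ forces a \emph{uniform} positive gap $\gamma_*:=\inf_n(\beta_{n+1}-\beta_n)>0$: only finitely many gaps can be smaller than any prescribed threshold, and those finitely many are positive because the $\beta_n$ are pairwise distinct. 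Thus the classical Ingham inequality of \cite{ingham1936,komornik2005} is available, and the only reason it does not conclude immediately is that the given $T$ may fail $T>2\pi/\gamma_*$.

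The upper bound and the convergence of $h$ are the easy half. Under a uniform gap the classical Ingham \emph{upper} estimate holds on an interval of any length (it is a Bessel-type bound that does not require the interval to be long), so $\int_0^T|h|^2\le C_2\sum_n|\gamma_n|^2$ with $C_2$ depending only on $T$ and $\gamma_*$; applying this to tails of the series shows that the partial sums are Cauchy in $L^2(0,T)$, which gives the asserted convergence.

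For the lower bound I would exploit that $\beta_{n+1}-\beta_n\to+\infty$ is strictly stronger than a uniform gap, so that the estimate should hold for \emph{every} $T>0$. Fix $T>0$ and choose $\delta>0$ with $T>2\pi/\delta$, and then $T'\in(2\pi/\delta,T)$. By the growing-gap hypothesis there is an $N$ such that $\beta_{n+1}-\beta_n>\delta$ for all $|n|\ge N$. For the infinite high-frequency tail $\{e^{i\beta_n t}\}_{|n|\ge N}$, whose consecutive gaps all exceed $\delta$, the classical Ingham inequality supplies both bounds on the interval $(0,T')$; this is the step proved by testing $|h|^2$ against the nonnegative cosine kernel $\cos(\pi t/T')$ on a symmetric interval, after the harmless translation of $(0,T')$ that only multiplies each $\gamma_n$ by a unimodular factor and changes neither $|h|$ nor $\sum|\gamma_n|^2$.

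The remaining, and main, difficulty is to reinstate the finitely many low-frequency exponentials $\{e^{i\beta_n t}\}_{|n|<N}$, since the low- and high-frequency blocks are not orthogonal in $L^2(0,T)$ and the cross terms must be absorbed. I would handle this by Haraux's perturbation theorem (see \cite{komornik2005}): adjoining finitely many pairwise distinct frequencies to a family that satisfies both Ingham inequalities on an interval of length $T'$ preserves both inequalities on every strictly longer interval, here $(0,T)$ since $T>T'$. As the full frequency set is pairwise distinct, this yields $C_1>0$ for the whole family and completes the proof. If one prefers a self-contained argument, the recombination can instead be carried out by hand: the finite block is linearly independent, hence has a positive-definite Gram matrix, and a compactness argument on the finite-dimensional low-frequency space, combined with the spectral separation of the two blocks, upgrades the two separate lower bounds to a joint one.
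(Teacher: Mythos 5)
Your proof is correct, but there is nothing in the paper to compare it against: Lemma \ref{ingham} is quoted as a classical result, with the proof delegated entirely to the references \cite{ingham1936} and \cite{komornik2005}. What you have written is, in substance, the proof given in the cited Komornik--Loreti monograph: the classical Ingham theorem for a uniformly separated tail on a short interval, followed by Haraux's finite-perturbation theorem to reinstate the finitely many low frequencies on any strictly longer interval, plus the observation that the direct (upper) inequality holds on intervals of arbitrary length and yields $L^2$-convergence of the series by applying it to tails. So you have supplied the standard argument behind a lemma the authors chose not to prove. Three small points are worth tightening if you keep the write-up. First, pairwise distinctness gives only $\beta_{n+1}-\beta_n\neq 0$ for the finitely many exceptional indices, not positivity; what you actually need, and what does follow (since $\beta_{n+1}-\beta_n\to+\infty$ forces the sequence to be increasing and unbounded in both directions outside a finite block), is $\inf_{m\neq n}\lvert\beta_m-\beta_n\rvert>0$. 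Second, when you isolate the tail $\{\beta_n:\lvert n\rvert\ge N\}$ you should enlarge $N$ so that the negative and positive tails are themselves separated by at least $\delta$ from each other; otherwise the reordered tail need not have a uniform gap $\delta$ at the junction. Third, in the application the coefficients are complex, so $\sum\gamma_n^2$ should be read as $\sum\lvert\gamma_n\rvert^2$ throughout (this is an imprecision of the paper's statement, which your Gram-matrix reformulation already handles correctly). None of these affects the validity of the argument.
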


Now, observe that following the spectral analysis for  the operator $A$ given in \cite[Appendix: Proof of Theorem 3.11]{capistrano2016}, we obtain that 
\begin{align}\label{uv}
\theta^{\pm}n(x):= \mp\dfrac{i}{\sqrt{2}}v_n(L-x) \quad \text{and} \quad
u_n^{\pm}(x)=\dfrac{1}{\sqrt{2}}v_n(x),
\end{align}
where $\{v_n\}_{n\in \Z}$ are the eigenvectors of the operator $\mathcal{B}$ defined as 
\begin{align*}
\mathcal{B}y=-y'''(L-x)-y'(L-x),
\end{align*}
with domain $D(\mathcal{B})=\left\lbrace y \in H^3(0,L)\cap H^1_0(0,L): y'(L)=0\right\rbrace$, which is closely related with the operator $A$. So, operator $\mathcal{B}$ has the following properties that can be seen in \cite[Appendix: Proof of Theorem 3.11]{capistrano2016}.
\begin{lemma}\label{lemma42}The operator $\mathcal{B}$  is self-adjoint in $L^2(0,L)$. Moreover, the following claims hold: 
\begin{enumerate}
\item[(i)] If $L \in (0,\infty)  \setminus \NN$, then $$\mathcal{B}^{-1}: L^2(0,L)\rightarrow H^{3}(0,L)$$ is well defined continuous operator. Here, $\NN$ is defined by \eqref{criticalrapid};
\vspace{0.2cm}
\item[(ii)] There is an orthonormal basis $\{v_n\}_{n\in \Z}$ in $L^2(0,L)$ composed of eigenvectors of $\mathcal{B}$: $v_n \in D(\mathcal{B})$ and $\mathcal{B}v_n = \lambda_n v_n$ for all $n \in \N$ for some $\lambda_n \in \R$.
\end{enumerate}
\end{lemma}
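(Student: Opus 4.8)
The plan is to exploit a reflection symmetry hidden in $\mathcal{B}$. Writing $(Ry)(x)=y(L-x)$ for the reflection on $L^2(0,L)$ (a self-adjoint unitary involution), a direct differentiation gives the pointwise identity $\mathcal{B}y=(Ry)'''+(Ry)'$, and $y\in D(\mathcal{B})$ if and only if $(Ry)(0)=(Ry)(L)=(Ry)'(0)=0$. The crucial consequence is a dichotomy: the eigenvalue equation $\mathcal{B}v=\lambda v$ is nonlocal (it ties $v$ to $Rv$), but the inhomogeneous equation $\mathcal{B}y=g$ becomes, after the change of variable $\xi=L-x$, the purely local two-point problem $y'''+y'=-Rg$ with $y(0)=y(L)=y'(L)=0$. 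I would prove the lemma by treating self-adjointness by hand, the inverse through this local boundary value problem, and the eigenbasis through the spectral theorem.

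First, symmetry. For $y,z\in D(\mathcal{B})$ I would compute $(\mathcal{B}y,z)_{L^2}$, change variables $\xi=L-x$ to obtain $-\int_0^L(y'''+y')\overline{Rz}\,d\xi$, and integrate by parts three times; the conditions $y(0)=y(L)=y'(L)=0$ together with $(Rz)(0)=(Rz)(L)=(Rz)'(0)=0$ annihilate every boundary contribution and return $(y,\mathcal{B}z)_{L^2}$. To promote this to $\mathcal{B}^*=\mathcal{B}$ I would characterize $D(\mathcal{B}^*)$: testing $z\in D(\mathcal{B}^*)$ against $y\in C_c^\infty(0,L)$ shows $(Rz)'''+(Rz)'\in L^2$, whence $Rz\in H^3$ (the operator $\partial_x^3+\partial_x$ is elliptic of order three), so $z\in H^3$; re-running the integration by parts with $z\in H^3$ but its boundary values a priori free, the demand that the leftover boundary term vanish for all $y\in D(\mathcal{B})$, for which $y'(0),y''(0),y''(L)$ range freely, forces precisely $z(0)=z(L)=z'(L)=0$, i.e. $z\in D(\mathcal{B})$. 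This argument is valid for every $L$.

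For part (i) I would solve $y'''+y'=-Rg$, $y(0)=y(L)=y'(L)=0$ by variation of parameters on the homogeneous basis $\{1,\cos x,\sin x\}$; the associated $3\times 3$ determinant equals $1-\cos L$, so the problem is uniquely solvable as soon as $L\neq 2\pi k$. Since $2\pi k=\frac{2\pi}{\sqrt{3}}\sqrt{k^2+k\cdot k+k^2}\in\NN$, the hypothesis $L\in(0,\infty)\setminus\NN$ from \eqref{criticalrapid} guarantees solvability, and the resulting Green's function gives $\|y\|_{H^3}\le C\|Rg\|_{L^2}=C\|g\|_{L^2}$, so $\mathcal{B}^{-1}\colon L^2(0,L)\to H^3(0,L)$ is well defined and continuous. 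The same elliptic estimate $\|y\|_{H^3}\le C(\|\mathcal{B}y\|_{L^2}+\|y\|_{L^2})$ shows the embedding $D(\mathcal{B})\hookrightarrow L^2(0,L)$ is compact, so $\mathcal{B}$ has compact resolvent; the spectral theorem for self-adjoint operators with compact resolvent then delivers the orthonormal eigenbasis $\{v_n\}_{n\in\Z}$ with real eigenvalues $\lambda_n$ required in part (ii). As a cross-check one may instead read off $\{v_n\}$ from the known eigenbasis of $A$ via \eqref{uv}: since $R$ is unitary, orthonormality of $\{(\theta_n^\pm,u_n^\pm)\}$ in $[L^2(0,L)]^2$ yields orthonormality of $\{v_n\}$, and any $f$ with $f\perp v_n$ for all $n$ makes $(0,f)$ orthogonal to the whole basis, hence $f=0$.

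The main obstacle is the self-adjointness, and specifically the inclusion $D(\mathcal{B}^*)\subseteq D(\mathcal{B})$: one must bookkeep all the boundary terms generated by the three integrations by parts and verify that exactly the three defining conditions of $D(\mathcal{B})$ are forced, neither fewer nor more. By contrast the solvability threshold relevant here is elementary ($1-\cos L\neq 0$) and lies well inside $\NN$; the delicate complex-analytic spectral study of \cite{capistrano2016,rosier} that produces the full set $\NN$ governs the nonzero eigenvalues and is not needed for this lemma, which is precisely why invoking the abstract spectral theorem in part (ii) is far cleaner than confronting the nonlocal eigenproblem $\mathcal{B}v=\lambda v$ directly.
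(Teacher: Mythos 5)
Your proof is correct, and it takes a genuinely different route from the paper, which in fact offers no proof of Lemma \ref{lemma42} at all: the statement is imported wholesale from the appendix of \cite{capistrano2016}, where the spectrum of $\mathcal{B}$ is analyzed by explicitly solving the eigenvalue problem with the exponential ansatz $v_n(x)=\sum_{j}a_j\bigl[e^{r_jx}-ie^{r_j(L-x)}\bigr]$ (the computation partly reproduced in Appendix \ref{appendix} of this paper). Your reflection device is sound: with $(Ry)(x)=y(L-x)$ one indeed has $\mathcal{B}y=(Ry)'''+(Ry)'$, the inhomogeneous equation $\mathcal{B}y=g$ is the \emph{local} two-point problem $y'''+y'=-Rg$, $y(0)=y(L)=y'(L)=0$, and only the eigenvalue equation stays nonlocal. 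I checked the individual steps: the three integrations by parts kill all boundary terms on $D(\mathcal{B})\times D(\mathcal{B})$; for $z\in D(\mathcal{B}^*)$ the leftover boundary form, linear in the free traces $y'(0),y''(0),y''(L)$, forces exactly $z(0)=z(L)=z'(L)=0$; the determinant of the homogeneous system on the basis $\{1,\cos x,\sin x\}$ is $1-\cos L$; and $2\pi k=\tfrac{2\pi}{\sqrt3}\sqrt{k^2+k\cdot k+k^2}\in\NN$, so $L\notin\NN$ indeed implies solvability in (i); the elliptic estimate then gives compact resolvent and the spectral theorem yields (ii) for every $L>0$. What your argument buys is a short, self-contained, checkable proof, plus the (correct) observation that the hypothesis $L\notin\NN$ in (i) is far stronger than what invertibility of $\mathcal{B}$ actually requires, namely $L\notin 2\pi\Z$. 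What it does not deliver is the quantitative spectral data — the asymptotics \eqref{eee1} and the trace behavior of Proposition \ref{behavor2} — for which the explicit ansatz of \cite{capistrano2016} remains indispensable elsewhere in the paper; so your proof can replace the citation for this lemma, but not for the rest of Section \ref{Sec4}.
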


Finally, the net result is a direct consequence of the spectral analysis for  the operator $A$ and ensures the well-posedness for the homogeneous system associated to the system \eqref{bb1a}-\eqref{b1.3a}.
\begin{lemma}\label{teo}
For any $(\eta_0,w_0)=\sum_{n\in \Z}\left( z^{n,+}_{0} (\theta^+_n,u_n^+) + z^{n,-}_{0} (\theta^-_n,u_n^-)\right) \in H_s$, there exists a unique solution of 
\begin{equation*}
\begin{cases}
(\eta_t,w_t)=A(\eta,w), \\
(\eta(0),w(0))=(\eta_0,w_0),
\end{cases}
\end{equation*}
belonging of $C(\R,H_s)$ and given by 
\begin{equation*}
(\eta(x,t), w(x,t))=\sum_{n\in\Z}e^{i\lambda_n t} \left( z^{n,+}_{0} (\theta^+_n,u_n^+) + z^{n,-}_{0} (\theta^-_n,u_n^-)\right).
\end{equation*}
Additionally, as $\{\lambda_k\}_{k\in\mathbb{Z}} \subset \R$, we have $$\|(\eta(t),w(t))\|_{s}=\|(\eta_0,w_0)\|_s.$$
\end{lemma}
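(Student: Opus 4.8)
The plan is to prove Lemma~\ref{teo} by directly exhibiting the solution as a spectral series and verifying that it solves the abstract Cauchy problem, using the fact (from \cite{capistrano2016}) that the eigenfunctions form an orthonormal basis of $X_0 = [L^2(0,L)]^2$ and that $A$ is skew-adjoint (Proposition~\ref{prop1}). The natural strategy is to treat the series $(\eta(x,t),w(x,t)) = \sum_{n\in\Z} e^{i\lambda_n t}\left(z^{n,+}_0(\theta^+_n,u^+_n) + z^{n,-}_0(\theta^-_n,u^-_n)\right)$ as the candidate solution and show (a) it converges in $C(\R,H_s)$, (b) it satisfies the equation and initial condition, (c) it is the unique such solution, and (d) the norm is conserved.

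First I would establish the norm-conservation identity, which actually handles convergence at the same time. Since each eigenfunction is scaled in the $H_s$-norm by the factor $(1+|\lambda_n|)^{s/3}$ (see Definition~\ref{spacesH}), and since $|e^{i\lambda_n t}| = 1$ because the $\lambda_n$ are real (this is precisely where skew-adjointness of $A$ enters, guaranteeing purely imaginary eigenvalues $\pm i\lambda_n$), each coefficient satisfies $|e^{i\lambda_n t} z^{n,\pm}_0| = |z^{n,\pm}_0|$. Therefore the partial sums are Cauchy uniformly in $t$, giving a well-defined element of $H_s$ for each $t$ with
\begin{equation*}
\|(\eta(t),w(t))\|_s = \left(\sum_{n\in\Z}(1+|\lambda_n|)^{\frac{2}{3}s}\left(|z^{n,+}_0|^2 + |z^{n,-}_0|^2\right)\right)^{1/2} = \|(\eta_0,w_0)\|_s,
\end{equation*}
which is the stated isometry and simultaneously shows the series converges in $H_s$. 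Continuity in $t$ follows because each term $e^{i\lambda_n t}(\cdot)$ is continuous and the tail of the series is uniformly small in $t$ by the same dominating estimate; this gives membership in $C(\R,H_s)$.

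Next I would verify that the series solves the abstract problem. Applying $A$ termwise and using $A(\theta^\pm_n,u^\pm_n) = \pm i\lambda_n(\theta^\pm_n,u^\pm_n)$, one checks that the time derivative of each term equals $A$ applied to that term, so formally $(\eta_t,w_t) = A(\eta,w)$; the evaluation at $t=0$ reduces each $e^{i\lambda_n\cdot 0} = 1$ and recovers $(\eta_0,w_0)$ by the assumed expansion. For uniqueness I would invoke the standard semigroup argument: since $A$ generates the group $(e^{tA})_{t\in\R}$ of isometries (Proposition~\ref{prop1}), the solution of the homogeneous Cauchy problem is unique and given by $e^{tA}(\eta_0,w_0)$, which coincides term-by-term with the displayed series via the spectral representation of the group.

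The main obstacle is justifying the termwise differentiation and the termwise action of $A$ rigorously, since $A$ is an unbounded operator and convergence of $\sum z^{n,\pm}_0 (\theta^\pm_n,u^\pm_n)$ in $H_s$ does not by itself license applying $A$ under the sum. The clean way to handle this is to argue at the level of the strongly continuous group: one defines the solution as $e^{tA}(\eta_0,w_0)$, whose explicit spectral form is exactly the stated series, and then the differential equation holds in the appropriate weak/mild sense on all of $H_s$ without needing to differentiate the series by hand. For $s$ where the data lies in $D(A)$ the solution is classical; for general $s$ it is the continuous extension of the group to the scale of spaces $H_s$, which is well defined precisely because the group acts isometrically on each $H_s$ as computed above. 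This extension step—checking that the isometric group on $X_0=H_0$ extends consistently to every $H_s$—is the only point requiring genuine care, and it follows directly from the diagonal action of $e^{tA}$ on the orthonormal basis.
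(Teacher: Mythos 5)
Your argument is correct and is essentially the proof the paper has in mind: the paper gives no written proof, simply asserting the lemma is ``a direct consequence of the spectral analysis'' of $A$, and your write-up fills in exactly that reasoning (diagonal action of the isometric group $e^{tA}$ on the orthonormal eigenbasis, $|e^{i\lambda_n t}|=1$ since the $\lambda_n$ are real, termwise verification plus semigroup uniqueness, and extension to the $H_s$ scale). No discrepancy to report.
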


\subsection{Proof of (H3)}  Let us take $$(\eta_0,w_0)=\sum_{n\in \Z}\left( z^{n,+}_{0} (\theta^+_n,u_n^+) + z^{n,-}_{0} (\theta^-_n,u_n^-)\right) \in H_1,$$ for $\beta_n=\lambda_n$
%\begin{align*}
%&\gamma_n= z_0^{n,+} \theta^+_{n,x}(L)+ z_0^{n,-} \theta^-_{n,x}(L) 
%\end{align*}
and
\begin{align*}
&\delta_n = z_0^{n,+}u^+_{n,x}(0)+ z_0^{n,-}u^-_{n,x}(0), 
\end{align*}
%$$\gamma_k= z_0^{n,+} \theta^+_{n,x}(L)+ z_0^{n,-} \theta^-_{n,x}(L)+z_0^{n,+}u^+_{n,x}(L)+ z_0^{n,-}u^-_{n,x}(L).$$  
where $\theta^{\pm}_{n,x}$ and $u_{n,x}^{\pm}$ are defined in \eqref{uv}. Initially, note that \eqref{eee1}  implies that sequences $\{\beta_n\}_{n\in \N}$ satisfies 
\begin{align*}
\lim_{|n|\rightarrow \infty} (\beta_{n+1}-\beta_n)=+\infty.
\end{align*}
Thanks to \eqref{uv}, we deduce the following quantities
\begin{align}\label{4.2}
\begin{cases}
\theta^+_{n,x}(L):= -\dfrac{i}{\sqrt{2}}v_{n,x}(0), \quad  \overline{\theta^-_{n,x}(L)}:= -\dfrac{i}{\sqrt{2}}\overline{v_{n,x}(0)},  \\
u_{n,x}^+(0)=\dfrac{1}{\sqrt{2}}v_{n,x}(0), \quad \overline{u_{n,x}^-(0)}=\dfrac{1}{\sqrt{2}} \overline{v_{n,x}(0)}.
\end{cases}
\end{align}
Thus, it follows that 
\begin{align*}
 \sum_{n\in\Z}\delta_n^2&=\sum_{n\in\Z} \left|z_0^{n,+} u^+_{n,x}(0)+ z_0^{n,-} u^-_{n,x}(0)\right|^2  \\
 &= \sum_{n\in\Z} |z_0^{n,+} |^2 |u^+_{n,x}(0)|^2 + 2Re \left(  z_0^{n,+}  \overline{z_0^{n,-} } u^+_{n,x}(0) \overline{u^-_{n,x}(0)} \right)+ |z_0^{n,-}|^2 |u^-_{n,x}(0)|^2 \\
 &= \sum_{n\in\Z} \frac{ |v_{n,x}(0)|^2}{2} \left( |z_0^{n,+} |^2-2Re \left(  z_0^{n,+}  \overline{z_0^{n,-} } \right)+|z_0^{n,-}|^2 \right) \\
 &= \frac{1}{2}\sum_{n\in\Z}  |v_{n,x}(0)|^2  |z_0^{n,+} +z_0^{n,-}|^2.
\end{align*}
%Similarly, this yields 
%\begin{align*}
% \sum_{n\in\Z}\delta_n^2 =\sum_{n\in\Z} \left|z_0^{n,+} u^+_{n,x}(0)+ z_0^{n,-} u^-_{n,x}(0)\right|^2  &= \frac{1}{2}\sum_{n\in\Z}  |v_{n,x}(0)|^2  |z_0^{n,+} +z_0^{n,-}|^2.
%\end{align*}
Hence, 
\begin{align*}
\sum_{n\in\Z}\delta_n^2 = \frac{1}{2}\sum_{n\in\Z}  \frac{|v_{n,x}(0)|^2}{(1+|\lambda_n|)^{\frac{2}{3}}} \left[  (1+|\lambda_n|)^{\frac{2}{3}}|z_0^{n,+} +z_0^{n,-}|^2\right].
\end{align*}

Using the asymptotic behavior \eqref{eee1}, there exists a positive constant $C_1$ such that
\begin{align*}
 \frac{ |v_{n,x}(0)|^2}{(1+|\lambda_k|)^{\frac{2}{3}}}\leq C_1,  \quad \forall n \in \Z.
\end{align*}
Thus, 
%\begin{align}\label{otro1}
%\sum_{n\in\Z}\gamma_n^2 &\leq C_1\sum_{n\in\Z} (1+|\lambda_k|)^{\frac{2}{3}}\left( |z_0^{n,+}+ z_0^{n,-}|^2  \right) = C_1\|(\eta_0,w_0)\|_{1}^2
%\end{align}
%and 
\begin{align}\label{otro2}
\sum_{n\in\Z}\delta_n^2 &\leq C_1\sum_{n\in\Z} (1+|\lambda_k|)^{\frac{2}{3}}\left( |z_0^{n,+}+ z_0^{n,-}|^2  \right) = C_1\|(\eta_0,w_0)\|_{1}^2.
\end{align}
On the other hand, note that by Lemma \ref{teo}, we have
\begin{equation*}
(\eta(x,t), w(x,t))=\sum_{n\in\Z}e^{i\lambda_k t} \left( z^{n,+}_{0} (\theta^+_n,u_n^+) + z^{n,-}_{0} (\theta^-_n,u_n^-)\right).
\end{equation*}
Thus, its implies that  
%$$\eta_x(L,t)=\sum_{n\in\Z}e^{i\lambda_n t}\left( z_0^{n,+} \theta^+_{n,x}(L)+z_0^{n,-} \theta^-_{n,x}(L)\right)=\sum_{n\in\Z}\gamma_ne^{i\lambda_n t}$$ 
%and 
$$w_x(0,t)=\sum_{n\in\Z}e^{i\lambda_n t} \left( z_0^{n,+}u^+_{n,x}(0)+ z_0^{n,-}u^-_{n,x}(0)\right)=\sum_{n\in\Z}\delta_ne^{i\lambda_n t}.$$ From Lemma \ref{ingham} and  relations \eqref{uv}
%, \eqref{otro1} 
and \eqref{otro2}, we obtain that 
%\begin{equation}\label{otro3}
%C_2  \sum_{n\in\Z} \left|z_0^{n,+} \theta^+_{n,x}(L)+ z_0^{n,-} \theta^-_{n,x}(L)\right|^2  \leq \int_0^T |\eta_x(L)|^2dt \leq C_1\|(\eta_0,w_0)\|_{1}^2,
% \end{equation}
%and 
\begin{equation}\label{otro4}
C_2 \sum_{n\in\Z} \left|z_0^{n,+} u^+_{n,x}(0)+ z_0^{n,-} u^-_{n,x}(0)\right|^2  \leq \int_0^T |w_x(0)|^2dt \leq C_1\|(\eta_0,w_0)\|_{1}^2,
 \end{equation}
for some positive constants $C_1$ and $C_2$.  Thanks to \eqref{otro4}, condition $(H3)$ holds in $H_1$-norm.  

\subsection{Proof of (H4)}  We can see using \eqref{4.2} and \eqref{otro4} that
\begin{align}\label{observability}
\begin{split}
 \int_0^T |w_x(0)|^2dt 
 \geq &\ C_3 \sum_{n\in\Z}  |v_{n,x}(0)|^2  |z_0^{n,+} +z_0^{n,-}|^2 \\
 = &\ C_3 \sum_{n\in\Z}  \frac{|v_{n,x}(0)|^2}{(1+|\lambda_n|)^{\frac{2}{3}}} \left[  (1+|\lambda_n|)^{\frac{2}{3}}|z_0^{n,+} +z_0^{n,-}|^2\right],
 \end{split}
\end{align}
holds for some $C_3>0$.
% Similarly, we can obtain 
%\begin{align}\label{observability_1}
%\begin{split}
%\int_0^T |\eta_x(L)|^2dt 
% \geq   &\ C_5\sum_{n\in\Z}  \frac{|v_{n,x}(0)|^2}{(1+|\lambda_n|)^{\frac{2}{3}}} \left[  (1+|\lambda_n|)^{\frac{2}{3}}|z_0^{n,+} +z_0^{n,-}|^2\right],
% \end{split}
%\end{align}
%for some $C_4>0$.
Observe that we can estimate the right hand side of \eqref{observability} in terms of any $H_s$--norm for $s\geq 1$. To finalize the proof of the hypothesis (H4), for  $H_1$--norm, we can not lose any coefficient $z_0^{n,\pm}$. Thus, we claim the following. 
\begin{claim}\label{claim2}
$v_{n,x}(0) \neq 0$ for all  $n \in \Z.$
\end{claim}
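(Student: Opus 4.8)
The plan is to argue by contradiction: assume $v_{n,x}(0)=v_n'(0)=0$ for some $n$, and deduce that $L$ must belong to the critical set $\mathcal{N}$, contradicting $L\in(0,+\infty)\setminus\mathcal{N}$. Since $\mathcal{B}$ is real and self-adjoint (Lemma \ref{lemma42}), we may take $v_n$ real and $\lambda_n\in\R$, with $v_n\not\equiv 0$ and, from $D(\mathcal{B})$, the boundary conditions $v_n(0)=v_n(L)=v_n'(L)=0$.

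First I would linearize the nonlocal eigenvalue relation $-v_n'''(L-x)-v_n'(L-x)=\lambda_n v_n(x)$ into a constant-coefficient ODE. Setting $c(x):=v_n(x)+i\,v_n(L-x)$, one combines this relation with its reflection $x\mapsto L-x$ to obtain
\[
c'''+c'-i\lambda_n c=0 \qquad\text{on }(0,L).
\]
Reality and nontriviality of $v_n$ force $c\not\equiv 0$ (if $c\equiv0$ then $v_n(x)=-i\,v_n(L-x)$, and conjugating gives $v_n\equiv0$). A direct trace computation yields $c(0)=c(L)=0$ from $v_n(0)=v_n(L)=0$, while $c'(0)=v_n'(0)$ and $c'(L)=-i\,v_n'(0)$ because $v_n'(L)=0$. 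Hence the standing assumption $v_n'(0)=0$ upgrades this to the \emph{overdetermined} data $c(0)=c'(0)=c(L)=c'(L)=0$.

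Next I would solve the ODE explicitly. Let $m_1,m_2,m_3$ be the roots of $m^3+m-i\lambda_n=0$, so by Vieta $m_1+m_2+m_3=0$ and $m_1m_2+m_1m_3+m_2m_3=1$. Assuming the roots distinct and writing $c=\sum_j\alpha_j e^{m_j x}$, the conditions $c(0)=c'(0)=0$ force, up to a scalar, $(\alpha_1,\alpha_2,\alpha_3)=(m_2-m_3,\,m_3-m_1,\,m_1-m_2)$. Imposing in addition $c(L)=c'(L)=0$ gives two linear relations on $E=(e^{m_1L},e^{m_2L},e^{m_3L})$ whose joint solution space, by a short cross-product computation, is spanned by $(1,1,1)$; therefore
\[
e^{m_1L}=e^{m_2L}=e^{m_3L}.
\]
Thus $m_i-m_j\in\frac{2\pi i}{L}\Z$ for all $i,j$. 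Writing $m_1-m_2=\frac{2\pi i k}{L}$, $m_2-m_3=\frac{2\pi i l}{L}$ with $k,l\in\Z\setminus\{0\}$ and feeding $\sum_j m_j=0$, $\sum_{i<j}m_im_j=1$ back in yields $L^2=\frac{4\pi^2}{3}(k^2+kl+l^2)$, i.e. $L\in\mathcal{N}$ — the desired contradiction.

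I expect the delicate point to be this overdetermination step: recognizing that the extra trace condition generated by $v_n'(0)=0$ collapses the two boundary relations at $x=L$ into $e^{m_1L}=e^{m_2L}=e^{m_3L}$, which is exactly the mechanism (in the spirit of Rosier's analysis of the characteristic polynomial, see \cite{rosier,capistrano2016}) that singles out the critical lengths $\mathcal{N}$. The remaining issues are routine bookkeeping: a repeated root occurs only for the two exceptional values $\lambda_n=\pm\frac{2}{3\sqrt{3}}$ (where $3m^2+1=0$), which can be handled directly with a Jordan basis $\{e^{m_0x},xe^{m_0x},\dots\}$ and gives the same conclusion, while $\lambda_n=0$ is excluded since $\mathcal{B}$ is invertible for $L\notin\mathcal{N}$ by Lemma \ref{lemma42}(i).
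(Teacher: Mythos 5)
Your argument is correct and is essentially the paper's own proof: via \eqref{uv}, your function $c(x)=v_n(x)+i\,v_n(L-x)$ coincides (up to a constant and complex conjugation) with the combination $\theta^+_{n_0}+u^+_{n_0}$ used in the paper, and the resulting overdetermined problem $c'''+c'-i\lambda_n c=0$ with $c(0)=c(L)=c'(0)=c'(L)=0$ is exactly the spectral problem the paper hands to \cite[Lemma 3.5]{rosier}. The only difference is that you re-derive that lemma's conclusion inline (distinct characteristic roots forced to satisfy $e^{m_1L}=e^{m_2L}=e^{m_3L}$, hence $L\in\mathcal{N}$, plus the exceptional repeated-root values) instead of citing it, which makes the step self-contained but adds nothing new in substance.
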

Indeed, suppose by contradiction  that there exists $n_0 \in \Z$ such that $v_{n_0,x}(0) = 0$. This implies that 
\begin{equation}\label{thethan}
(\theta^+_{n_0,x}(L), u^+_{n_0,x}(0)) = 0 \quad \text{and}\quad (\theta^-_{n_0,x}(L), u^-_{n_0,x}(0)) = 0.
\end{equation}
In particular, considering $u(x)= \theta_{n_0}^+(x) + u_{n_0}^+(x)$, there exist $\lambda_{n_0} \in \C$ such that 
\begin{equation*}
\begin{cases}
u'''+u' +\lambda_{n_0}u =0, \\
u(0)=u(L)=u'(0)=u'(L)=0.
\end{cases}
\end{equation*}
From \cite[Lemma 3.5]{rosier}, it follows that $L \in \mathcal{N}$, which is a contradiction, and Claim \ref{claim2} holds.

Lastly, due to the asymptotic behavior \eqref{eee1} and Claim \ref{claim2}, there exits $C_4>0$ such that $$\frac{|v_{n,x}(0)|^2}{(1+|\lambda_n|)^{\frac{2}{3}}} \geq C_4,$$ for all $n\in \N$, Thus, follows by \eqref{thethan} that 
\begin{equation}\label{H4}
\int_0^T |w_x(0)|^2 dt   \geq C_4\|(\eta_0,w_0)\|_{1}^2  
% \quad \text{and}   \quad \int_0^T |\eta_x(L)|^2  dt 
 %\geq C_4\|(\eta_0,w_0)\|_{1}^2.
\end{equation}
Therefore, relation (H4) is satisfied.

%------------------------------------------------------------------------
%%%%%%%%%%%%%%%%%%%%%%%%%%%%%%%%%%%%%%%%%%%%%%%%%%%%%%%%%%%%%%%%%%%
%------------------------------------------------------------------------

\section{Rapid stabilization: Control design}\label{Sec5}

In this section we design the feedback law using the Urquiza's approach to show the rapid exponential stabilization for solutions of the system \eqref{bb1a}-\eqref{b1.3a}. Recall that this system takes an abstract form \eqref{abstr}  and the operators $A$ and $B$ are given by \eqref{operatorA}-\eqref{operatorA1} and \eqref{operatorB}-\eqref{operatorB1}, respectively. With this in hand, we are in position to prove our main result.

\subsection{Proof of Theorem \ref{main1}} 
For any $(p_0,q_0)$, $(r_0,s_0)$ in $H_1$ and $\omega>0$, consider the bilinear form defined by 
\begin{equation}\label{form_1}
a_{\omega}((p_0,q_0),(r_0,s_0)):=\int_0^{\infty} e^{-2\omega \tau} q_x(0,\tau)s_x(0,\tau)d\tau.
\end{equation}
Here $(p,q)$ and $(r,s)$ are solutions of 
\begin{equation*}
\begin{cases}
p_{\tau}+q_x+q_{xxx}=0, \\
q_{\tau}+p_x+p_{xxx}=0, \\
p(0,\tau)=p(L,\tau)=p_x(0,\tau)=0, \\
q(0,\tau)=q(L,\tau)=q_x(L,\tau)=0, \\
p(x,0)=p_0(x), \quad  q(x,0)=q_0(x)
\end{cases}
\end{equation*}
and 
\begin{equation*}
\begin{cases}
r_{\tau}+s_x+s_{xxx}=0, \\
s_{\tau}+r_x+r_{xxx}=0, \\
r(0,\tau)=r(L,\tau)=r_x(0,\tau)=0, \\
s(0,\tau)=s(L,\tau)=s_x(L,\tau)=0, \\
r(x,0)=r_0(x), \quad  s(x,0)=s_0(x),
\end{cases}
\end{equation*}
respectively.  Finally, consider the following operator  $\Lambda_{\omega}: H_1 \longrightarrow H_{-1}$  satisfying the relation
\begin{equation}\label{form1_1}
\left\langle \Lambda_{\omega} (p_0,q_0),(r_0,s_0)\right\rangle_{H_{-1},H_1}=a_{\omega}((p_0,q_0),(r_0,s_0)), \quad \forall (p_0,q_0),(r_0,s_0) \in H_1 \text{ and } \forall\omega>0. 
\end{equation}
Note that, from \eqref{adjunt} we have that 
\begin{align*}
\left\langle \Lambda_{\omega} (p_0,q_0),(r_0,s_0)\right\rangle_{H_{-1},H_1} & = \int_0^{\infty} e^{-2\omega \tau} q_x(0,\tau) s_x(0,\tau) d\tau, \\
&= \int_0^{\infty} e^{-2\omega \tau} B^*(p(x, \tau), q(x, \tau )) B^*(r(x, \tau), s(x, \tau ))d\tau.
\end{align*}
Thanks to the Theorem \ref{urquizamethod}, the operator $\Lambda_{\omega}$ is coercive and an isomorphism. On the other hand, set the functional 
\begin{equation*}
\begin{array}{l l l l}
F_{\omega}: & H_1 &   \longrightarrow & \R \\
            & (z_1,z_2)   &  \longmapsto          &F_{\omega}((z_1,z_2)):=q_0'(0), 
\end{array}
\end{equation*}
where $(p_0,q_0)$ is the solution of the following Lax-Milgram problem 
\begin{equation}\label{laxmilgram_1}
a_{\omega}((p_0,q_0),(r_0,s_0))=\left\langle (z_1,z_2),(r_0,s_0)\right\rangle_{H_{-1},H_1}, \quad \forall (r_0,s_0) \in H_1.
\end{equation}
From \eqref{form1_1}, we deduce that $(z_1,z_2)= \Lambda_{\omega} (p_0,q_0)$ in $H_{-1}$. Moreover, observe that 
\begin{align*}
F_{\omega}(z_1,z_2)=q_0'(0)=- B^* (p_0,q_0)= - B^* \Lambda_{\omega}^{-1} (z_1,z_2), \quad \forall (z_1,z_2) \in H_{1}. 
\end{align*}

Thus, we are in the hypothesis of Theorem \ref{urquizamethod}, which one can be applied and guarantees the rapid exponential stabilization to the solutions of the system \eqref{bb1a}-\eqref{b1.3a}. It means that  for any $\omega > 0$,  there exists a continuous linear feedback control
\begin{align*}
f(t)=F_\omega(\eta(t),w(t))
\end{align*}
with $F_{\omega} =-B^*\Lambda_{\omega}^{-1}$ where  $\Lambda_{\omega}$ is given by  \eqref{form_1}-\eqref{form1_1} and a positive constant $C$, such that for every initial conditions $(\eta_0,w_0) \in H_1$, the solution $(\eta,w)$ of the closed-loop system \eqref{bb1a}-\eqref{b1.3a}, satisfies
\begin{equation*}
\|(\eta(t),w(t))\|_{H_1} \leq Ce^{-2\omega t}\|(\eta_0,w_0)\|_{H_1},
\end{equation*}
with a decay equals to $2\omega$.
\qed

\section{Further comments}\label{sec6}
We have applied the Gramian approach to build some boundary feedback law to prove the rapid stabilization for a coupled KdV--KdV type system. Considering one control acting on the Neumann boundary condition at the right-hand side of the interval where the system evolves we are able to prove that the closed-loop system is locally exponentially stable with a decay rate that can be chosen to be as large as we want. Below we present some final remarks.
\begin{itemize}
%\item[$\bullet$] It is well known that observability property plays an important role in the study of the controllability problems. As consequence of the spectral analysis developed in Section \ref{Sec4}, we were able to revisit the \textit{observability inequality} associated to the linearized system \eqref{bb1}-\eqref{b1.3}.
\item[$\bullet$]Theorem A guarantees the stabilization of the KdV-KdV system with four controls and Theorem B ensures the rapid stabilization with two controls. However, Theorem \ref{main1} gives us a best result for the linear system, that is, we are able to make the solutions of the linear system go to zero with only one control acting at the boundary. It is important to point out here that the drawback is that we are not able to treat the nonlinear case. This is due to the lack of any Kato smoothing effect, as in the case of a single KdV \cite{cerpa2009}, which leaves the rapid stabilization for the full system \eqref{b1} with boundary conditions \eqref{b1.3} completely open to study.
\item[$\bullet$] Note that we can also prove that  there exists a continuous linear feedback control $$g(t)=F_\omega(\eta(t),w(t))$$ such that the closed-loop system \eqref{bb1} with boundary conditions 
\begin{equation*}
\begin{cases}
\eta(0,t)=0,\,\,\eta(L,t)=0,\,\,\eta_{x}(0,t)=0,& \text{in} \,\, (0,+\infty),\\
w(0,t)=0,\,\,w(L,t)=0,\,\,w_{x}(L,t)=g(t),& \text{in} \,\, (0,+\infty),\\
\end{cases}
\end{equation*}
satisfies
\begin{equation*}
\|(\eta(t),w(t))\|_{H_1} \leq Ce^{-2\omega t}\|(\eta_0,w_0)\|_{H_1},
\end{equation*}
with a decay equals to $2\omega$. To prove this consider  the operator $B$ given by
\begin{equation*}
\begin{array}{c c c l }
B: & \mathbb{R}  & \longrightarrow &  D(A^*)'  \\
 & s & \longmapsto & B s := L_s
\end{array}
\end{equation*}
where $s \in \R$ and $L_s$ is a functional given by 
\begin{equation*}
\begin{array}{c c c l }
L_s: &  D(A^*)  & \longrightarrow &  \R \\
 & (u,v)& \longmapsto &  L_s(u,v):= su_x(L).
\end{array}
\end{equation*}
With these information in hand and the following observability inequality 
$$
\int_0^T |\eta_x(L)|^2  dt 
 \geq C\|(\eta_0,w_0)\|_{1}^2, \quad C>0,
 $$  the result  follows using the same idea as done in the proof of  Theorem \ref{main1}.
\end{itemize}   

\appendix

\section{Proof of Proposition \ref{behavor2}}\label{appendix}
Following the ideas of \cite[Appendix A. Proof of Theorem 3.11]{capistrano2016}, we observe that $v_n$ takes the form
\begin{equation}\label{new8}
v_n(x)=\sum_{j=1}^3 a_j\left[ e^{r_jx}-ie^{r_j(L-x)}\right],
\end{equation}
with $a_j=a_j(n) \in  \C$, for $j=1,2,3$, where
\begin{equation}\label{new}
\begin{split}
\sum_{j=1}^{3} a_{j}\left(e^{r_{j} L}-i\right)=0, \\ 
\sum_{j=1}^{3} a_{j}\left(1-i e^{r_{j} L}\right)=0, \\ 
\sum_{j=1}^{3} r_{j} a_{j}\left(e^{r_{j} L}+i\right)=0
\end{split}
\end{equation}
and $r_j$, $j=1,2,3$, are pairwise distinct such that
\begin{equation}\label{r1}
r_1=r_1(n) \sim -i\lambda^{1/3}_n, \quad r_2=r_2(n) \sim -ip\lambda^{1/3}_n, \quad r_3=r_3(n) \sim -ip^2\lambda^{1/3}_n,
\end{equation}
for $p=e ^{i\frac{2\pi}{3}}$. Note that the equations in \eqref{new} imply that 
\begin{equation*}
\sum_{j=1}^{3} a_{j}=\sum_{j=1}^{3} a_{j} e^{r_{j} L}=0,
\end{equation*}
that is, 
$$
a_{3} =-a_{1}-a_{2}
$$ 
and 
$$a_{1}\left(e^{r_{1} L}-e^{r_{3} L}\right)+a_{2}\left(e^{r_{2} L}-e^{r_{3} L}\right) =0.$$
Moreover, if we assume $\lambda_n\to\infty$, we have
\begin{align*}
& r_1 = -i\lambda^{1/3}_n + O(\lambda^{-1/3}_n)\sim -i\lambda^{1/3}_n \\
& r_2=-ip\lambda^{1/3}_n +  O(\lambda^{-1/3}_n) \sim  \left( \frac{\sqrt{3}}{2}+\frac{i}{2}\right)\lambda^{1/3}_n
\end{align*}
and
\begin{align*}
&r_3= -ip^2\lambda^{1/3}_n  + O(\lambda^{-1/3}_n)  \sim \left( -\frac{\sqrt{3}}{2}+\frac{i}{2}\right)\lambda^{1/3}_n.
\end{align*}
Additionally, if $\lambda_n\to-\infty$, we have
\begin{align*}
& r_1 = -i\lambda^{1/3}_n + O(\lambda^{-1/3}_n)\sim i | \lambda^{1/3}_n|, \\
& r_2=-ip\lambda^{1/3}_n +  O(\lambda^{-1/3}_n) \sim  -\left( \frac{\sqrt{3}}{2}+\frac{i}{2}\right)| \lambda^{1/3}_n|
\end{align*}
and
\begin{align*}
&r_3= -ip^2\lambda^{1/3}_n  + O(\lambda^{-1/3}_n)  \sim \left( \frac{\sqrt{3}}{2}-\frac{i}{2}\right)| \lambda^{1/3}_n|.
\end{align*}
Note that the previous relations implies
\begin{equation}\label{new5}
|e^{r_1L}| \rightarrow 1, \quad |e^{r_2L}| \rightarrow +\infty, \quad |e^{r_3L}| \rightarrow 0, \quad \text{as $n\rightarrow \infty$}
\end{equation}
and
\begin{equation}\label{new5'}
|e^{r_1L}| \rightarrow 1, \quad |e^{r_2L}| \rightarrow 0, \quad |e^{r_3L}| \rightarrow +\infty, \quad \text{as $n\rightarrow -\infty$.}
\end{equation}
The convergences \eqref{new5} and \eqref{new5'} ensures that, $$\lambda_n \rightarrow \pm\infty,\quad \text{ as }n\rightarrow\pm\infty.$$

With these relation in hand, the following claim can be verified.

\begin{claim}
\textit{The behaviors \eqref{behaveigenfunctions} hold whenever there exist positive constant $C_1$ and $C_2$ such that}
\begin{equation}\label{new1}
\lim_{|n|\rightarrow \infty} \frac{| v'_n(0)|}{|n|}=C_1 \quad \text{and} \quad  \lim_{|n|\rightarrow \infty} \frac{| v'_n(L)|}{|n|}=C_2.
\end{equation}
\end{claim}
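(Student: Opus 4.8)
The plan is to prove the Claim in two stages: first reduce all four boundary traces appearing in \eqref{behaveigenfunctions} to the single quantity $|v'_n(0)|$ by means of the explicit identities \eqref{uv}, and then read off \eqref{new1} from the representation \eqref{new8} of $v_n$. For the reduction I would simply differentiate \eqref{uv}. Since $\theta^{\pm}_n(x)=\mp\frac{i}{\sqrt2}v_n(L-x)$ and $u^{\pm}_n(x)=\frac{1}{\sqrt2}v_n(x)$, the chain rule gives $\theta^{\pm}_{n,x}(L)=\pm\frac{i}{\sqrt2}v'_n(0)$ and $u^{\pm}_{n,x}(0)=\frac{1}{\sqrt2}v'_n(0)$, which reproduce, up to an irrelevant sign, the identities collected in \eqref{4.2}. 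Taking moduli yields $|\theta^{\pm}_{n,x}(L)|=|u^{\pm}_{n,x}(0)|=\frac{1}{\sqrt2}|v'_n(0)|$, so that dividing by $|n|$ and letting $|n|\to\infty$ shows that the single limit $\lim_{|n|\to\infty}|v'_n(0)|/|n|=C_1$ already forces \eqref{behaveigenfunctions} with $C_1^{\pm}=C_2^{\pm}=C_1/\sqrt2$. This settles the reduction contained in the Claim.

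It then remains to verify \eqref{new1}. Differentiating \eqref{new8} gives $v'_n(0)=\sum_{j=1}^3 a_jr_j(1+ie^{r_jL})$ and $v'_n(L)=\sum_{j=1}^3 a_jr_j(e^{r_jL}+i)$; the latter vanishes identically, since it is precisely the third equation of \eqref{new} (equivalently, the domain condition $v'_n(L)=0$), so it is $v'_n(0)$ that carries the relevant information. To evaluate $v'_n(0)$ I would solve the homogeneous system \eqref{new} for $(a_1,a_2,a_3)$ up to scale, using the simplifications $a_3=-a_1-a_2$ and $a_1(e^{r_1L}-e^{r_3L})+a_2(e^{r_2L}-e^{r_3L})=0$ already recorded, then substitute the asymptotics \eqref{r1} of the $r_j$ together with the magnitude estimates \eqref{new5}--\eqref{new5'}, and finally fix the overall scale via the $L^2$-normalization $\|v_n\|_{L^2}=1$.

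The main obstacle will be this asymptotic solution of the $3\times3$ system, because by \eqref{new5}--\eqref{new5'} the three factors $e^{r_jL}$ have drastically different sizes (one tending to $1$, one to $+\infty$, and one to $0$). One must therefore identify the dominant balance with care and retain leading-order terms consistently, so that after normalization the coefficients $a_j$ stay bounded and the surviving contribution to $v'_n(0)=\sum_{j} a_jr_j(1+ie^{r_jL})$ is of exact order $|r_j|\sim\lambda_n^{1/3}$. Since \eqref{eee1} gives $\lambda_n^{1/3}\sim\frac{2\pi}{L}|n|$, this produces a finite positive limit for $|v'_n(0)|/|n|$, which is exactly \eqref{new1}; combined with the reduction of the first paragraph, the Claim follows.
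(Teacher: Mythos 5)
Your route is essentially the paper's: the reduction of \eqref{behaveigenfunctions} to \eqref{new1} by differentiating \eqref{uv} reproduces the identities \eqref{4.2}, and your plan for \eqref{new1} --- solve \eqref{new} for the $a_j$ up to scale, fix the scale by $\|v_n\|_{L^2(0,L)}=1$, and insert the asymptotics \eqref{r1} and \eqref{new5}--\eqref{new5'} --- is exactly the computation the appendix carries out ($|a_1(n)|\to 1/\sqrt{2L}$, then $v_n'(0)\sim K_1^{\pm}a_1(n)\lambda_n^{1/3}$). One point where you genuinely improve on the paper: your observation that $v_n'(L)=\sum_{j}a_jr_j(e^{r_jL}+i)$ vanishes identically is correct --- it is the third equation of \eqref{new}, i.e.\ the condition $y'(L)=0$ in $D(\mathcal{B})$ --- whereas the paper asserts $v_n'(L)\sim K_2^{\pm}a_1(n)\lambda_n^{1/3}$ with $K_2^{\pm}\neq 0$. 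Hence the second limit in \eqref{new1} is $0$, the hypothesis of the Claim as literally written can never hold with a \emph{positive} $C_2$, and your reformulation resting everything on the single limit for $v_n'(0)$ (all four traces having modulus $\tfrac{1}{\sqrt{2}}|v_n'(0)|$) is the correct one. The dominant-balance computation you flag as the remaining obstacle is the bulk of the paper's appendix, but since the Claim itself is only the implication \eqref{new1} $\Rightarrow$ \eqref{behaveigenfunctions}, your first paragraph already proves the stated statement.
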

In fact, to obtain the limit \eqref{new1}, we have to analyze the asymptotic behavior of the $a_j(n)$ terms. First, note that $\|v_n\|_{L^2(0,L)}=1$, since $\{v_n\}_{n\in \Z}$ is an orthonormal basis thanks to the Lemma \ref{lemma42}, thus
\begin{align}\label{new2}
1 &= \int_0^L \left | \sum_{j=1}^3 a_j\left[ e^{r_jx}-ie^{r_j(L-x)}\right] \right| ^2dx = \int_0^L  \left( \sum_{j=1}^3 A_j^2(x)  +2 \sum_{i,j=1, i\neq j}^3  A_i(x)A_j(x) \right) dx,
\end{align}
where $A_j(x)= a_j\left[ e^{r_jx}-ie^{r_j(L-x)}\right] $. Then, 
\begin{align*}
\int_0^L A_j^2(x)dx &= \int_0^L a_j^2 \left( e^{2r_jx} - 2ie^{r_j x}e^{r_j(L-x)}-e^{2r_j(L-x)} \right) dx = a_j^2 \left[ \frac{e^{2r_j x}}{2r_j} -2ie^{r_j L}x+  \frac{e^{2r_j (L-x)}}{2r_j}\right]_0^L 
\end{align*}
hence 
\begin{equation}\label{new3}
\int_0^L A_j^2(x)dx = - 2i a_j^2e^{r_j L}L.
\end{equation}
On the other hand, 
\begin{align*}
\int_0^L A_i(x) A_j(x) dx &= \int_0^L a_ia_j \left( e^{r_i x} - ie^{r_i(L-x)} \right)\left( e^{r_jx} - ie^{r_j(L-x)} \right) dx  \\
%&=a_ia_j \int_0^L  \left( e^{(r_i+r_j)x} - ie^{(r_i-r_j)x}e^{r_jL}-  ie^{(r_j-r_i)x}e^{r_iL} - e^{(r_i+r_j)(L-x)}   \right)dx  \\
&= a_ia_j \left[ \frac{e^{(r_i+r_j)x}}{r_i+r_j} - \frac{ie^{(r_i-r_j)x}e^{r_jL}}{r_i-r_j}+ \frac{ie^{-(r_i-r_j)x}e^{r_iL}}{r_i-r_j} +
\frac{e^{(r_i+r_j)(L-x)}}{r_i+r_j} \right]_0^L,
\end{align*}
therefore
\begin{equation}\label{new4}
\int_0^L A_i(x) A_j(x) dx = 2ia_ia_j\frac{e^{r_jL}-e^{r_iL}}{r_i-r_j}, \quad \forall i \neq j.  
\end{equation}
Putting together \eqref{new3} and \eqref{new4} in \eqref{new2}, we get
\begin{multline}\label{newa}
-2iL\left(a_1^2 e^{r_1L} + a_2^2 e^{r_2L}+a_3^2 e^{r_3L}\right) \\+ 4i\left( a_1a_2\frac{e^{r_2L}-e^{r_1L}}{r_1-r_2} +a_1a_3\frac{e^{r_3L}-e^{r_1L}}{r_1-r_3} +a_2a_3\frac{e^{r_3L}-e^{r_2L}}{r_2-r_3} \right)=1.
\end{multline}
Thanks to the second and first relation in \eqref{new}, respectively, we have
\begin{equation*}
a_2=- \Gamma a_1, \quad  a_3= -a_1(1-\Gamma),
\end{equation*}
where $\Gamma = \left( e ^{r_1L}-e ^{r_3L}\right) \left( e ^{r_2L}-e ^{r_3L}\right)^{-1}$.  Now, using \eqref{newa}, we obtain
\begin{multline}\label{eqnew1}
-2iLa_1^2\left(e^{r_1L} + \Gamma^2 e^{r_2L}+(1-\Gamma)^2 e^{r_3L}\right) \\
 + 4ia_1^2\left(-\Gamma\frac{e^{r_2L}-e^{r_1L}}{r_1-r_2} -(1-\Gamma)\frac{e^{r_3L}-e^{r_1L}}{r_1-r_3}+ \Gamma(1-\Gamma)\frac{e^{r_3L}-e^{r_2L}}{r_2-r_3}  \right) =1.
\end{multline}
Moreover, note that 
\begin{align*}
1-\Gamma &= \frac{e^{r_2L}-e^{r_1L}}{e^{r_2L}-e^{r_3L}}
\end{align*}
and
\begin{align*}
\Gamma(1-\Gamma) &= \frac{(e^{r_1L}-e^{r_3L})(e^{r_2L}-e^{r_1L})}{(e^{r_2L}-e^{r_3L})^2}.
\end{align*}
Therefore, we have 
\begin{align*}
\Gamma(1-\Gamma)\frac{e^{r_3L}-e^{r_2L}}{r_2-r_3} &=- \frac{(e^{r_1L}-e^{r_3L})(e^{r_2L}-e^{r_1L})}{(r_2-r_3)(e^{r_2L}-e^{r_3L})},
\end{align*}
\begin{align*}
\Gamma\frac{e^{r_2L}-e^{r_1L}}{r_1-r_2}&=  \frac{(e^{r_1L}-e^{r_3L})(e^{r_2L}-e^{r_1L})}{(r_1-r_2)(e^{r_2L}-e^{r_3L})}
\end{align*}
and
\begin{align*}
(1-\Gamma)\frac{e^{r_3L}-e^{r_1L}}{r_1-r_3}&= - \frac{(e^{r_1L}-e^{r_3L})(e^{r_2L}-e^{r_1L})}{(r_1-r_3)(e^{r_2L}-e^{r_3L})}.
\end{align*}
Using the previous equalities in \eqref{eqnew1}, follows that
\begin{multline*}
-2iLa_1^2\left(e^{r_1L} + \Gamma^2 e^{r_2L}+(1-\Gamma)^2 e^{r_3L}\right) \\
 +   4ia_1^2  \frac{(e^{r_1L}-e^{r_3L})(e^{r_2L}-e^{r_1L})}{(e^{r_2L}-e^{r_3L})}  \left( \frac{ 1}{r_1-r_3}- \frac{1}{r_2-r_3} -\frac{1}{r_1-r_2}  \right)  =1
\end{multline*}
or, equivalently, 
\begin{align*}
-2ia_1^2\left[L\left(e^{r_1L} + \Gamma^2 e^{r_2L}+(1-\Gamma)^2 e^{r_3L}\right) 
 - 2\Gamma  (e^{r_2L}-e^{r_1L}) \phi (r_1,r_2,r_3) \right]=1,
\end{align*}
where $$\phi (r_1,r_2,r_3) =\frac{ 1}{r_1-r_3}- \frac{1}{r_2-r_3} -\frac{1}{r_1-r_2}.$$
Noting that
\begin{equation*}
|a_1(n)|^2= \frac{1}{2\left|L\left( e^{r_1L} + \Gamma^2 e^{r_2L}+(1-\Gamma)^2 e^{r_3L}\right) -    2\Gamma   (e^{r_2L}-e^{r_1L}) \phi (r_1,r_2,r_3)  \right|},
\end{equation*}
follows that
\begin{multline}\label{new6}
\begin{aligned}
\frac{1}{2 \left( L|e^{r_1L}| +L| \Gamma^2 e^{r_2L}|+L|(1-\Gamma)^2 e^{r_3L} |+ 2|\Gamma | |e^{r_2L}-e^{r_1L}|| \phi (r_1,r_2,r_3)|\right)} \\
 \leq |a_1(n)|^2 \leq \frac{1}{2\left| L|e^{r_1L}| - L|\Gamma^2 e^{r_2L}| -L|(1-\Gamma)^2 e^{r_3L}|- 2|\Gamma | |e^{r_2L}-e^{r_1L}|| \phi (r_1,r_2,r_3)|\ \right|}.
 \end{aligned}
\end{multline}
Additionally, from \eqref{r1}, we have 
\begin{align*}
& r_1-r_2 = -i(1-p)\lambda^{1/3}_n + O(\lambda^{-1/3}_n), \\
& r_2-r_3=-ip(1-p)\lambda^{1/3}_n +  O(\lambda^{-1/3}_n)
\end{align*}
and
\begin{align*}
&r_1-r_3= -i(1-p^2)\lambda^{1/3}_n  + O(\lambda^{-1/3}_n),
\end{align*}
which allow us to conclude that $\phi (r_1,r_2,r_3)\rightarrow 0$, as $|n|\rightarrow\infty$. 

Observe that
\begin{align*}
|\Gamma| = \left| \frac{(e^{r_1L}-e^{r_3L})}{e^{r_2L}(1-e^{r_3L}e^{-r_2L})} \right| \leq  \frac{ |e^{r_1L}|+|e^{r_3L}|}{|e^{r_2L}||1-e^{r_3L}e^{-r_2L}|}
\end{align*}
\begin{align*}
|\Gamma^2 e^{r_2L}| = \left| \frac{(e^{r_1L}-e^{r_3L})^2}{e^{r_2L}(1-e^{r_3L}e^{-r_2L})^2} \right| \leq  \frac{( |e^{r_1L}|+|e^{r_3L}|)^2}{|e^{r_2L}||1-e^{r_3L}e^{-r_2L}|^2}.
\end{align*}
and
\begin{align*}
|(1-\Gamma)^2e^{r_3L}| = \left| \left(1-\frac{e^{r_1L}-e^{r_3L}}{e^{r_2L}-e^{r_3L}}\right)^2e^{r_3L}\right|\leq \frac{( |e^{r_2L}|+|e^{r_1L}|)^2}{|e^{r_3L}||e^{r_2L}e^{-r_3L}-1|^2}.
\end{align*}
Due to the asymptotic behavior of $\{\lambda_n\}_{n\in\mathbb{N}}$ \eqref{eee1} and \eqref{new5}, we get
\begin{equation}\label{exp1}
|\Gamma| \rightarrow 0, \quad |\Gamma^2 e^{r_2L}| \rightarrow 0 \quad \text{and} \quad |(1-\Gamma)^2 e^{r_3L}|\rightarrow 0, \quad  \text{as $n \rightarrow \infty$.}
\end{equation}
Note that $|\Gamma e^{r_2L}| \rightarrow 1$, which implies that $$|\Gamma | |e^{r_2L}-e^{r_1L}| | \phi (r_1,r_2,r_3)|  \rightarrow 0 \quad  \text{ as } n \rightarrow \infty.$$ On the other hand, thanks to \eqref{new5'} follows that
\begin{equation}\label{exp2}
|\Gamma| \rightarrow 1, \quad  |\Gamma||e^{r_2L}-e^{r_1L}| \rightarrow 1, \quad |\Gamma^2 e^{r_2L}| \rightarrow 0 \quad  \text{and} \quad |(1-\Gamma)^2e^{r_3L}| \rightarrow 0, \quad  \text{as $n \rightarrow -\infty$}.
\end{equation}
Thus, using $\eqref{exp1}$, $\eqref{exp2}$ and passing to the limit in \eqref{new6}, we deduce that
\begin{equation}\label{new7}
\lim_{|n|\rightarrow \infty} |a_1(n)| = \frac{1}{\sqrt{2L}}.
\end{equation}

Let us prove \eqref{new1}. From \eqref{new8} we have that  $$v_n'(x) =\sum_{j=1}^3a_jr_j \left[ e^{r_jx}+ie^{r_j(L-x)}\right],$$ then
$$
v_n'(0) = \sum_{j=1}^3a_jr_j \left[ 1+ie^{r_jL}\right]
$$
and
$$v_n'(L) = \sum_{j=1}^3a_jr_j \left[ e^{r_jL}+i \right].$$
Thanks to \eqref{new}, it follows that
\begin{align*}
v_n'(0) &= a_1r_1 \left( 1+ie^{r_1L}\right) +a_2r_2 \left( 1+ie^{r_2L}\right)-(a_1+a_2)r_3 \left( 1+ie^{r_3L}\right)\\
%& \color{blue} = a_1r_1 \left( 1+ie^{r_1L}\right) +a_2(r_2-r_3)+ a_2r_2ie^{r_2L}-a_1r_3 \left( 1+ie^{r_3L}\right) - a_2r_3ie^{r_3L}\\
%&\color{blue} = a_1r_1 \left( 1+ie^{r_1L}\right) +a_2(r_2-r_3)+ a_2i(r_2e^{r_2L}-r_3e^{r_3L})-a_1r_3 \left( 1+ie^{r_3L}\right) \\
&= a_1r_1 \left( 1+ie^{r_1L}\right) +a_2(r_2-r_3)+ a_2r_2i(e^{r_2L}-e^{r_3L}) +a_2ie^{r_3L}(r_2-r_3)-a_1r_3 \left( 1+ie^{r_3L}\right) \\
&= a_1r_1 \left( 1+ie^{r_1L}\right) +a_2(r_2-r_3)- a_1r_2i(e^{r_1L}-e^{r_3L}) +a_2ie^{r_3L}(r_2-r_3)-a_1r_3 \left( 1+ie^{r_3L}\right) \\
%&\color{blue} = a_1r_1 \left( 1+ie^{r_1L}\right) +a_2(r_2-r_3)(1+ie^{r_3L})- a_1r_2i(e^{r_1L}-e^{r_3L}) -a_1r_3 \left( 1+ie^{r_3L}\right) \\
&= a_1 \left[  r_1 \left( 1+ie^{r_1L}\right)  -r_3-r_2ie^{r_1L}\right]     +a_2(r_2-r_3)(1+ie^{r_3L}) + a_1ie^{r_3L}(r_2 -r_3) \\
&= a_1 \left[  r_1 \left( 1+ie^{r_1L}\right)  -r_3-r_2ie^{r_1L}\right]   -\frac{a_1(e^{r_1L}-e^{r_3L})}{(e^{r_2L}-e^{r_3L})}(r_2-r_3)(1+ie^{r_3L}) + a_1ie^{r_3L}(r_2 -r_3) 
%&\color{blue}= a_1 \left[  r_1 \left( 1+ie^{r_1L}\right)  -r_3-r_2ie^{r_1L}\right] +a_1(r_2-r_3) \left[ ie^{r_3L} -\frac{(e^{r_1L}-e^{r_3L})}{(e^{r_2L}-e^{r_3L})}(1+ie^{r_3L}) \right] \\
%&\color{blue}= a_1 \left[  r_1 \left( 1+ie^{r_1L}\right)  -r_3-r_2ie^{r_1L}\right] +a_1(r_2-r_3) \left[ \frac{ie^{r_3L}(e^{r_2L}-e^{r_3L}) -(e^{r_1L}-e^{r_3L})
%(1+ie^{r_3L})}{(e^{r_2L}-e^{r_3L})} \right] \\
%&= a_1\left( \left[  r_1 \left( 1+ie^{r_1L}\right)  -r_3-r_2ie^{r_1L}\right] +(r_2-r_3) \left[ \frac{ie^{r_3L}(e^{r_2L}-e^{r_1L}) -(e^{r_1L}-e^{r_3L})}{(e^{r_2L}-e^{r_3L})} \right] \right)
\end{align*}
We analyze the case when $n\rightarrow +\infty$, the case when $n\rightarrow -\infty$ can be shown  analogously. Noting that
$$
\frac{e^{r_1L}-e^{r_3L}}{e^{r_2L}-e^{r_3L}}=O\left( e^{-\frac{\sqrt{3}}{2}\lambda^{1/3}L}\right),
$$
$$r_2-r_3=O\left(\lambda^{1/3}\right)$$
and 
$$ 1+ie^{r_3L}=O\left(1+ e^{-\frac{\sqrt{3}}{2}\lambda^{1/3}L}\right),$$
we have that
\begin{align*}
\frac{a_1(e^{r_1L}-e^{r_3L})}{(e^{r_2L}-e^{r_3L})}(r_2-r_3)(1+ie^{r_3L}) + a_1ie^{r_3L}(r_2 -r_3)=O\left(e^{-\frac{\sqrt{3}}{2}\lambda^{1/3}L}\right).
\end{align*}
 Note that $(1-p^4)= (1-p) $, due the fact that $p=e ^{i\frac{2\pi}{3}}$. Thus,
\begin{align*}
v_n'(0) & = a_1\left(   (r_1-r_3)  +ie^{r_1L} (r_1-r_2)  + O\left( \lambda_n^{1/3}e^{-\frac{\sqrt{3}}{2}\lambda_n^{1/3} L}\right)\right) \\
&= a_1\left(   (-i\lambda_n^{1/3})(1-p^2)  +ie^{r_1L} (-i\lambda_n^{1/3})(1-p)  + O\left( 1\right)\right) \\
&= a_1\left(   (-i\lambda_n^{1/3})(1-p^2)  +ie^{r_1L} (-i\lambda_n^{1/3})(1-p^4)  + O\left( 1\right)\right) \\
&= a_1 (-i\lambda_n^{1/3})(1-p^2)\left[1  +ie^{r_1L}(1+p^2)  + O\left( 1\right)\right] \\
&= a_1 (-i\lambda_n^{1/3})(1-p^2)\left(1  + ie^{r_1L}p^2  + O\left( 1\right)\right).
\end{align*}
Since $ e^{r_1L} \sim e^{-i\lambda_n^{1/3}L}\sim e^{-i\pi/6} \sim ip^2$, 
there exists $K_1^+ \in \C\setminus\{0\}$, such that
\begin{align*}
v_n'(0)  \sim K_1^+ a_1(n) \lambda_n^{1/3}.
\end{align*}
Similarly, there exists $K_2^+ \in \C\setminus\{0\}$, such that
\begin{align*}
v_n'(L)  \sim K_2^+ a_1(n) \lambda_n^{1/3}.
\end{align*}
Analogously,  when $n\rightarrow -\infty$, we get $$v_n'(0)  \sim K_1^{-} a_1(n) \lambda_n^{1/3}$$ and $$v_n'(L)  \sim K_2^{-} a_1(n) \lambda_n^{1/3},$$ for some complex constants nonzero $K_1^{-}$ and $K_2^{-}$, respectively. 
Finally, \eqref{eee1} and \eqref{new7} ensure that \eqref{new1} follows and, consequently, Proposition \ref{behavor2} is achieved.

\subsection*{Acknowledgments:}  R. de A. Capistrano--Filho was supported by CNPq 408181/2018-4, CAPES-PRINT 88881.311964/2018-01, CAPES-MATHAMSUD 88881.520205/2020-01, MATHAMSUD 21- MATH-03 and Propesqi (UFPE). E. Cerpa was supported by ANID Millennium Science Initiative Program NCN19-161 and Basal Project FB0008. F. A. Gallego was supported by MATHAMSUD 21-MATH-03.

\end{document}